%This is a LaTeX2e file%

\documentclass{amsart}
\usepackage{euscript}           % Nice script font.
\usepackage{amsmath,amsthm}     % Handy math stuff, theorem environments.
\usepackage{amssymb}            % Fancy math symbols.

%\renewcommand{\baselinestretch}{1.75}
% Don't force the bottoms of the pages to be at the same spot:
\raggedbottom
% Allow worse line breaks while this work is in progress.
\tolerance=3000
% We'll get fewer ``underfull hbox'' messages with this set.
\hbadness=10000
\vbadness=10000
% We'll get fewer ``overfull hbox'' messages with this set.
\hfuzz=1.5pt
\vfuzz=1.5pt
% some convenient definitions
\newfont{\oldgerman}{eufm10}
\usepackage {epsf}

\newcommand {\Q}{{\mathbb Q}}

\newcommand {\Z}{{\mathbb Z}}

\newcommand{\ra}{\rightarrow}                   % right arrow
\newcommand{\lra}{\longrightarrow}              % long right arrow
                    % left arrow
               % long left arrow
      % labeled long right arrow

    % Use \EuScript to name a category.
 % Make defined words bold.
 % Even make math symbols bold

\newfont{\german}       {eufm10 at 12pt}

\newtheorem{thm}{Theorem}[section]
\newcounter{numerierer}
\newcounter{leer}

\newtheorem{defn}[thm]{Definition}
\newtheorem{prop}[thm]{Proposition}

\newtheorem{cor}[thm]{Corollary}
\newtheorem{lemma}[thm]{Lemma}

\theoremstyle{definition}  % Bold headings and Roman body text.
\newenvironment{definition}{\begin{defn}\rm}{\end{defn}}

\newtheorem{remark}[thm]{Remark}
%\theoremstyle{definition}  % Bold headings and Roman body text.
% Put sections, but not subsections, into the table of contents:
\setcounter{tocdepth}{2}

% Number subsections
%\setcounter{secnumdepth}{1}

\usepackage[frame,matrix,curve, arrow]{xy}
                                        % XY-pic diagram package
%\UsePSspecials{dvips}                   % Tell XY-pic to use postscript
% \UsePSspecials probably good for printer output, but not so hot
% while previewing.

\xymatrixcolsep{1.9pc}                          % Adjust size of diagrams.
\xymatrixrowsep{1.9pc}
\newdir{ >}{{}*!/-5pt/\dir{>}}                  % Make better tailed arrows.
\newdir{ |}{{}*!/-5pt/\dir{|}}                  % Make better mapsto arrows.

% Don't force the bottoms of the pages to be at the same spot:
\raggedbottom

% Allow worse line breaks while this work is in progress.
\tolerance=3000

% We'll get fewer ``underfull hbox'' messages with this set.
\hbadness=10000

% We'll get fewer ``overfull hbox'' messages with this set.
\hfuzz=1.5pt

\subjclass{55Q45, 55N34, 11F33, 55T15}
\begin{document}

\title{Toda brackets and  congruences of modular forms }
\author{Gerd Laures}

\address{ Fakult\"at f\"ur Mathematik,  Ruhr-Universit\"at Bochum, NA1/66,
  D-44780 Bochum, Germany}
%\email{gerd@laures.de }
\date{\today}
\begin{abstract}
This paper investigates the relation between Toda brackets and congruences of modular forms. It determines the $f$-invariant of Toda brackets and thereby generalizes the formulas of J.F.\ Adams for the classical $e$-invariant to the chromatic second filtration. \end{abstract}
\maketitle
\section{Introduction}
In this work congruences between modular forms are used to compute Toda brackets in the second Adams-Novikov filtration. The translation from stable homotopy classes between spheres and modular forms is based on the tertiary invariant 
$$ f: \xymatrix{\pi^{st}_{2n-2}\ar[r]& (D/M_0+M_n)_{\Q/ \Z}}$$
defined in \cite{MR1660325} with values in Katz's ring of divided congruences. The $f$-invariant generalizes the classical $e$ invariant of Frank Adams. The purpose of this work is an extension of Adams' formulas \cite[§11]{MR0198470} for the $e$-invariant of Toda brackets to the second filtration:
\begin{eqnarray*}
e\left< \alpha, p\iota , \beta \right> &=& -pe(\alpha)e(\beta)\\
e\left<  p\iota, \alpha , \beta \right> &=& -p\delta e(\alpha)e(\beta)\\
e\left< \alpha , \beta , p\iota \right> &=& -p\delta e(\alpha)e(\beta)
\end{eqnarray*}
modulo the indeterminacy.
Here, $\delta$ is a certain number which only depends on the dimensions.\par
The $e$-invariant is not strong enough to detect elements in the second filtration. The reason is the following: the $e$-invariant is based on $K$-theory and in the $K$-based Adams-Novikov spectral sequence  there aren't any classes in the second line other than classes in the image of $J$. In \cite{MR1660325} $K$-theory was replaced by elliptic cohomology theory of level $N$   (see \cite{MR1071369}\cite{MR1235295}\cite{MR1271552}). It was shown that the corresponding tertiary invariant $f$ is injective on the 2-line.
So, it is natural to ask for a precise relationship between the $f$-invariant and Toda brackets. \par
The 2-line of the classical Adams-Novikov spectral sequence was computed for odd primes by Miller, Ravenel and Wilson in \cite{MR0458423} with the help of the chromatic spectral sequence and by Shimomura for $p=2$ in \cite{MR635034}. In contrast to the 1-line it is not cyclic. The number of generators is unbounded when considered as a function of the dimension. Its complicated structure makes it hard to determine classes such as Massey products. The $f$-invariant simplifies the 2-line in two steps: first the complex orientation gives an injective map into the elliptic based $E_2$ term and in the second step the extension group is mapped to Katz's ring of divided congruences. In this paper it is reinterpreted as a coboundary of a resolution which can be viewed as an integral version of a chromatic resolution.\par 
The paper starts with a recollection on higher homotopy invariants in general. Here, the edge homomorphisms are used to construct invariants for stable homotopy classes with the help of any theory $E$. Under mild conditions the first few invariants take values in the $E$-based Adams-Novikov $E_2$-term. Then the relation between Toda brackets and Massey products is explained. Adams considered Massey products for extensions of exact sequences. However, it turns out that the cobar complex is better suited to keep the homological algebra small for higher filtrations. This holds especially when  a $\Q /\Z$- reduction is used to come down by one filtration. It follows a computational  section where the commutation between the coboundary operator and  Toda brackets  is investigated.  Proposition \ref{image} gives a precise translation of the elliptic  Adams-Novikov $E_2$ term by primitives in the ring of divided congruences. It comes from a resolution which is closely related to the chromatic resolution. Finally, the desired formulas for the $f$-invariants of Toda brackets are given. In the last section some examples are calculated including the Kervaire class in dimension 30 as 4-fold Toda bracket.

\section{Higher Homotopy invariants}
Suppose $E$ is a flat ring spectrum. Let $\bar{E}$ be the fibre of the unit map $S^0\lra E$. Then there is spectral sequence $(E_r,d_r)$ with the associated filtration of $\pi_*^S$ given for $s\geq 0$ by
$$ F^s = \mbox{im} (\pi_* \bar{E}^s \lra \pi_* S^0)$$
The differential $d_r$ raises the filtration by $r$. Hence  there are well defined `edge homomorphisms'
$$e_s: \xymatrix {F^s \ar[r]&\ F^s/F^{s+1} \ar@{>->}[r]&E_\infty^s \ar@{>->}[r]& E^s_{s+1}}.$$
For a stable class $\alpha$ the value $e_s(\alpha)$ is defined if and only if $e_r(\alpha)$ vanishes for all $r< s$. It associates to $\alpha$ its representative in the $E^s_{s+1}$-term.  The invariants are natural with respect to maps between spectral sequences. 
\par
In case $E$ is complex bordism $MU$ this is the classical Adams-Novikov spectral sequence. Its quotient group $F^0/F^1$ is concentrated in dimension 0 and the invariant 
$$ e_0:\xymatrix{\pi_0^S=F^0\ar[r] &\pi_0(MU)=\Z}$$ 
takes values in the integers. It associates to a stable self map of $S^0$ its degree $d$. In fact we may replace $E$ by any other spectrum with the property that the Hurewicz homomorphism is injective in dimension 0. If $E$ has torsion coefficients the map $e_0$ can be non trivial in positive dimensions. For instance this happens for real $K$-theory in dimensions $8k+1$ and $8k+2$.
\par
The invariant
$e_1$ 
 has been studied by Adams for real and complex $K$-theory in \cite{MR0198470}. It can be described in terms of exact sequences as follows: 
 Suppose the $e_0$ invariant of $\alpha\in \pi_n^S $ vanishes and let $C_\alpha$ be the cofibre of $\alpha$. Then the sequence 
 $$ \xymatrix{E_*S^0 \ar@{>->}[r]& E_*C_\alpha \ar@{->>}[r]& E_*S^{n+1}}$$
 of $E_*E$-comodules is short exact and hence determines an element in $$E^{1,n+1}_2\cong\mbox{Ext}^{1,n+1}_{E_*E}(E_*,E_*).$$
 Adams determines the extension group for $K$-theory with the help of a monomorphism
 $$\iota:  \xymatrix{E_2^{1,n+1} \ar@{>->}[r]&\Q/\Z}.$$
 In particular, the extension term is cyclic. For $K$-theory its order in dimension $n=4t-1$ is the denominator of the divided Bernoulli number $B_t/2t$. The composite $\iota e_1$ is called the classical  $e$-invariant.  \par
For variations of the topological modular forms spectrum $TMF$ the invariant $e_2$ has been studied (in chronological order) by the author \cite{MR1660325}\cite{MR1781277}, by Hornborstel-Naumann \cite{MR2354323}, by Behrens-Laures in \cite{MR2544384} and by von Bodecker \cite{vB09a}\cite{vB09c}. For its relation to characteristic classes of manifolds with corners see Laures \cite{MR1781277} and for an interpretation as a spectral invariant see von Bodecker \cite{vB09b} and Bunke-Naumann \cite{BN09}. 
\par
Before describing the invariant we first take a look at the case of complex bordism.  Here, for positive even $n$ the $e_1$-invariant vanishes. Moreover, there are no $d_2$-differentials hitting the 2-line. Thus the $E_3^2$-term injects into the $E_2^2$ term. This yields a well defined homomorphism
$$ e_2: \xymatrix{\pi_n^S\ar[r] & E_2^{2,n+2}}=\mbox{Ext}^{2,n+2}_{E_*E}(E_*,E_*).$$ 
Let us describe this invariant in terms of exact sequences in more detail: observe that any lift $\bar{\alpha}$ of a stable class $\alpha$ is unique since $E_*$ and $E_*E$ are evenly graded. Moreover, the sequence
$$  \xymatrix{E_*\Sigma \bar{E} \ar@{>->}[r]& E_*\Sigma C_{\bar{\alpha}} \ar@{->>}[r]& E_*S^{n+2}}$$
is short exact and can be spliced together with 
$$  \xymatrix{E_*S^{0} \ar@{>->}[r]& E_* E \ar@{->>}[r]& E_*\Sigma\bar{E}}$$
to get an extension of degree 2.  \par
Next suppose $E$ is complex oriented with $(p,v_1)$  regular and $v_2$ invertible mod $(p,v_1)$. Then locally at $p$ the 2-line of $E$ coincides with the 2-line of $v_2^{-1}MU_{(p)}$ \cite[4.3.3]{MR1781277} and there is a well understood injection\cite{MR860042}
$$ \xymatrix{E^2_2(MU_{(p)}) \ar@{>->}[r]&E^2_2(v_2^{-1}MU_{(p)})\cong E_2^2 (E)}.$$
In case of $\Gamma_1(N)$ topological modular forms $E=TMF_1(N)$ there is an injection 
 $$ \xymatrix{E_2^{2,2t} \ar@{>->}[r] & (D/M_0+M_t)_{ \Q /\Z }}$$
where $D$ is the ring of divided congruences and $M$ the ring of modular forms. This map will be reviewed in section \ref{invariants}. Furthermore, proposition \ref{image} describes its precise image.  
\par The $e_3$-invariant is defined for odd dimensional homotopy classes in the cokernel of the $J$-homomorphism. It takes values in the $E_2$-term since all elements in the 1-line are permanent cycles.  Hence the invariant
$$e_3: \xymatrix{\mbox{coker} J\ar[r]& E_2^3=\mbox{Ext}^3_{E_*E}(E_*,E_*)} $$
is well defined. It will not be further investigated here.
\section{Toda brackets and Massey products}
In the sequel we assume that $E$ is a complex oriented flat ring theory. The $E_1$-term of the $E$-based Adams-Novikov spectral sequence is the cobar complex $\Omega_*$ which is a differential graded algebra. We use the notation $\Gamma$ for the comodule $E_*E$ over $A=E_*$ and $\bar{\Gamma}$ for the augmentation ideal $E_*\Sigma\bar{E}$. Then the differential 
$$d_1: \xymatrix{\bar{\Gamma}^{\otimes s} \ar[r]&\bar{\Gamma}^{\otimes (s+1)}}$$
sends $[\alpha_1|\cdots |\alpha_s]$ to $[1|\alpha_1|\cdots |\alpha_s] -[\alpha_1'|\alpha_1''|\cdots |\alpha_s] +\ldots +(-1)^{s+1}[\alpha_1|\cdots |\alpha_s|1]$ and the product is given by concatenation. 
In fact, each $(E_r,d_r)$ is a differential graded algebra in which Massey products can be defined. 
Massey products in the cobar complex are related to Toda brackets in stable homotopy.  This relation has been studied by Adams, Moss, Lawrence, May, Kochman and others and is well known. \par
Recall from \cite{MR1407034},
\begin{definition}
Suppose $u\in E_r^{s,t+s}$ and $w\in E_{r'}^{s',t+s'}$ with $s'<s$. Then $d_{r'}(w)$ is a crossing differential of $d_r(u)$ if $s'+r'>s+r$.
\end{definition}
The following result reformulates the result \cite[5.7.5]{MR1407034} for the homotopy invariants of the last section. 
\begin{prop}\label{toda-massey}
Suppose that the Toda bracket $\left<\alpha_1, \ldots ,\alpha_n \right>$ is strictly defined. Let $ a_i$ be a representative of   $e_{s(i)}(\alpha_i)$  in the $MU$-based $E^{s(i)}_{r+1}$ for some $r$. Suppose further that the Massey product $\left< a_1,\ldots, a_n\right>$ is defined in $E_{r+1}$ and that there are no crossing differentials of $d_r a_{ij}$ for all defining systems $(a_{ij})$.

Then for $s=\sum_is(i)$ there is a class $\alpha \in \left<\alpha_1,\ldots ,\alpha_n \right>$ of filtration $s-n+2$ with 
$$ e_{s-n+2}(\alpha)\in \left< a_1,\ldots, a_n\right>.$$
\end{prop}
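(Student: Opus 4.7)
The plan is to reduce the statement directly to Moss's convergence theorem \cite[5.7.5]{MR1407034}, which is the standard tool relating Toda brackets in stable homotopy to Massey products in the Adams-Novikov spectral sequence, and then to translate the conclusion into the language of the edge homomorphisms $e_s$ introduced in the previous section.

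First I would invoke Moss's theorem directly. Under the given hypotheses, namely that $\langle \alpha_1,\ldots,\alpha_n\rangle$ is strictly defined, that the Massey product $\langle a_1,\ldots,a_n\rangle$ is defined in $E_{r+1}$, and that $d_r a_{ij}$ has no crossing differentials for any defining system, the theorem produces a class $\alpha \in \langle \alpha_1,\ldots,\alpha_n\rangle$ whose representative on the $E_{r+1}$-page of the $MU$-based Adams-Novikov spectral sequence is contained in the Massey product of the $a_i$. The no-crossing-differentials hypothesis is essential at this step: it prevents components of a defining system from being hit by shorter differentials, which would otherwise spoil the identification.

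Next comes the standard filtration count. In the cobar complex, forming a Massey product of $n$ elements of filtrations $s(1),\ldots,s(n)$ via a defining system produces, after the telescoping cancellations, a cocycle of filtration $s(1)+\cdots+s(n)-(n-2) = s-n+2$. Hence the representative of $\alpha$ lies in $E_{r+1}^{s-n+2,*}$, so $\alpha$ itself sits in filtration at least $s-n+2$ in the geometric filtration of $\pi_*^S$. Consequently the lower edge invariants $e_j(\alpha)$ vanish for $j<s-n+2$, which is exactly what is needed for $e_{s-n+2}(\alpha)\in E^{s-n+2}_{s-n+3}$ to be well-defined.

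The main obstacle, and the one point that is not purely bookkeeping, is to verify that the Massey product coset in $E_{r+1}^{s-n+2}$ still represents the Massey product coset after being pushed forward to $E_{s-n+3}^{s-n+2}$, so that $e_{s-n+2}(\alpha)$ does lie in $\langle a_1,\ldots,a_n\rangle$ as asserted. This is precisely where the no-crossing-differentials condition enters a second time: it ensures that each permissible perturbation of the defining system at page $r$ still produces a perturbation of the bracket at the later page, so the indeterminacies are compatible under the projection $E_{r+1}\twoheadrightarrow E_{s-n+3}$. Granted this compatibility, the claim is immediate from Moss's theorem together with the definition of $e_{s-n+2}$.
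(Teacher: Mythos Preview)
Your proof is correct and follows essentially the same route as the paper: both arguments simply invoke Kochman's convergence theorem \cite[5.7.5]{MR1407034}, noting that since each $a_i$ converges to $\alpha_i$, the Massey product consists of permanent cycles detecting elements of the Toda bracket in filtration $s-n+2$. The paper's version is terser and omits your third paragraph's page-compatibility discussion entirely, treating that as already contained in the conclusion of the cited theorem.
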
 
\begin{proof}
Since $a_i$ converges to $\alpha_i$ we can apply of \cite[5.7.5]{MR1407034}. Hence $\left< a_1,\ldots, a_n\right>$ consists of infinite cycles which converge to elements of $\left<\alpha_1, \ldots ,\alpha_n \right>$. Since  all elements have filtration $s-n+2$ the claim follows. 
\end{proof}
\begin{cor}
Under the assumptions of \ref{toda-massey} we have
$$ e_{s-n+2}(\alpha)\in \left< a_1,\ldots, a_n\right>$$
for all spectral sequences based on a complex oriented theory $E$.
\end{cor}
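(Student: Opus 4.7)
The plan is to deduce the statement from Proposition \ref{toda-massey} by naturality along the complex orientation $\phi: MU \to E$. This ring spectrum map induces a morphism of Adams--Novikov spectral sequences $\phi_*: E_r(MU) \to E_r(E)$ which is multiplicative on each page, commutes with all differentials $d_r$, and is natural with respect to the edge homomorphisms $e_s$; the last point is immediate from the definition given in Section~2, since $e_s$ depends only on the filtration tower $\{\bar{E}^s\}$, which is functorial in the ring spectrum.

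First I would apply Proposition~\ref{toda-massey} in the $MU$-based spectral sequence to produce a class $\alpha \in \langle\alpha_1, \ldots, \alpha_n\rangle$ of filtration $s-n+2$ with $e^{MU}_{s-n+2}(\alpha) \in \langle a_1, \ldots, a_n\rangle$, computed inside $E_{r+1}(MU)$. Next I would transport this assertion to $E$ via $\phi_*$: multiplicativity and compatibility with the differentials guarantee that $\phi_*$ carries a defining system for the $MU$ Massey product to a defining system for the $E$ Massey product, yielding the inclusion $\phi_*\langle a_1,\ldots,a_n\rangle \subseteq \langle\phi_* a_1,\ldots,\phi_* a_n\rangle$, while naturality of the edge homomorphism gives $\phi_* e^{MU}_{s-n+2}(\alpha) = e^E_{s-n+2}(\alpha)$, once one observes that an $MU$-filtration of at least $s-n+2$ forces the same lower bound on the $E$-filtration. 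Identifying $\phi_* a_i$ with the $E$-based representative of $e_{s(i)}(\alpha_i)$ then yields the corollary.

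The main mild subtlety is to verify that the hypotheses of Proposition~\ref{toda-massey} transport correctly across $\phi_*$. Strict definedness of the Toda bracket is a purely homotopical condition and hence unaffected by the choice of $E$. The absence of crossing differentials $d_r a_{ij}$ persists under $\phi_*$ because $\phi_*$ can kill differentials but cannot create new ones, and the definedness of the $E$-based Massey product follows by pushing the $MU$ defining system forward. Thus the transfer is essentially automatic, and the corollary is, as its label suggests, an immediate consequence of Proposition~\ref{toda-massey} together with the naturality remarks of Section~2.
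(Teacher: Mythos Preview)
Your proposal is correct and is precisely an unpacking of the paper's one-line proof, which reads in full: ``This follows from the naturality of all constructions.'' You have simply made explicit what that naturality entails---the map of spectral sequences induced by the orientation $MU\to E$, its compatibility with differentials, Massey products, filtrations, and edge homomorphisms---so the approach is the same, just with the details filled in.
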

\begin{proof}
This follows from the naturality of all constructions.
\end{proof}

The proposition suggests that for the $e_k$-invariant of  $n$-fold Toda brackets one should consider only sums of filtrations $s=k+n-2$. The cases for $n=3$ and $k=0,1$ were considered by Adams in \cite[Theorem 5.3]{MR0198470}. 
\par
The case $k=2$ is the object of the rest of the paper. In the next section triple Massey products are considered. Note that in this case there aren't any crossing differentials in the defining system as long as there aren't any $r$-boundaries in the 3-line for $r\geq 2$. 
Moreover, if the Toda bracket $\left< \alpha_1,\alpha_2, \alpha_3\right>$ is defined then so is the Massey product $\left< a_1,a_2, a_3\right>$. 

\begin{cor}\label{indet}
\begin{enumerate}
\item
Suppose that the Toda bracket   $\left< \alpha_1,\alpha_2, \alpha_3\right>$ is defined. Then 
$$ e_2 \left< \alpha_1,\alpha_2, \alpha_3\right>=\left< e_{s_1}\alpha_1,e_{s_2}\alpha_2, e_{s_3}\alpha_3\right> $$
modulo
$$ e_{s_1}\alpha_1  H^{s_2+s_3-1,|\alpha_2|+s_2+|\alpha_3|+s_3}(\Omega_*)+
H^{s_1+s_1-1,|\alpha_1|+s_1+|\alpha_2|+s_2}(\Omega_*)e_{s_3}\alpha_3.$$
\item
Suppose that the Toda bracket $\left< \alpha_1,\alpha_2, \alpha_3,\alpha_4\right>$ is strictly defined and that there are no crossing differentials. Then 
$$ e_2 \left< \alpha_1,\alpha_2, \alpha_3,\alpha_4\right>=\left< e_{s_1}\alpha_1,e_{s_2}\alpha_2, e_{s_3}\alpha_3,e_{s_4}\alpha_4 \right>$$
modulo
$$
\sum_{a,b,c}\left< a, e_{s_3}\alpha_3,e_{s_4}\alpha_4\right>+\left<  e_{s_1}\alpha_1,b,e_{s_4}\alpha_4\right>+\left<  e_{s_1}\alpha_1,e_{s_2}\alpha_2,c\right>.$$
\end{enumerate}
\end{cor}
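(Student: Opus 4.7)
My plan is to combine Proposition~\ref{toda-massey}, which handles a single representative, with the multiplicativity of the edge homomorphisms $e_s$, which handles the indeterminacy. For part (i), I would first fix some representative $\alpha_0 \in \langle\alpha_1,\alpha_2,\alpha_3\rangle$. As noted in the paragraph preceding the corollary, a defined triple bracket admits no crossing differentials in the relevant filtration, so the hypothesis of Proposition~\ref{toda-massey} is automatic once one picks representatives $a_i$ of $e_{s(i)}\alpha_i$; the proposition then gives $e_2(\alpha_0)\in\langle e_{s_1}\alpha_1,e_{s_2}\alpha_2,e_{s_3}\alpha_3\rangle$. Any other $\alpha$ in the Toda bracket differs from $\alpha_0$ by $\alpha_1\beta+\gamma\alpha_3$ for some $\beta\in\pi^S_{|\alpha_2|+|\alpha_3|+1}$ and $\gamma\in\pi^S_{|\alpha_1|+|\alpha_2|+1}$. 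Since the edge homomorphism is multiplicative on the associated graded of the DGA $\Omega_*$, the class $e_2(\alpha_1\beta)$ lies in $e_{s_1}\alpha_1\cdot H^{s_2+s_3-1,|\alpha_2|+s_2+|\alpha_3|+s_3}(\Omega_*)$: if $\beta$ is detected in filtration exactly $s_2+s_3-1$ the product equals $e_{s_1}\alpha_1\cdot e_{s_2+s_3-1}\beta$, and otherwise the product lies in higher filtration and is absorbed. A symmetric argument treats $\gamma\alpha_3$. Thus $e_2(\alpha)-e_2(\alpha_0)$ falls into the displayed bimodule, which is exactly the Massey product indeterminacy, giving equality of the two cosets.

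For part (ii), the overall structure is parallel. Proposition~\ref{toda-massey}, now invoked under the explicit strictly-defined and no-crossing assumption, produces some $\alpha\in\langle \alpha_1,\alpha_2,\alpha_3,\alpha_4\rangle$ with $e_2(\alpha)\in\langle e_{s_1}\alpha_1,\ldots,e_{s_4}\alpha_4\rangle$. The indeterminacy of a strictly-defined 4-fold Toda bracket is generated by triple Toda brackets of the form $\langle b,\alpha_3,\alpha_4\rangle$, $\langle\alpha_1,b',\alpha_4\rangle$ and $\langle\alpha_1,\alpha_2,b''\rangle$ with $b,b',b''$ satisfying the expected null-homotopy conditions. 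I would then apply part (i) of the corollary to each of these triple brackets: the $e_2$-images land in the corresponding 3-fold Massey products appearing in the displayed sum, with $a=e_*b$, $b=e_*b'$, $c=e_*b''$. Summing these contributions gives the claimed indeterminacy on the Massey side.

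The hard part will be the filtered multiplicativity step in part~(i): one must be careful that a factor $\beta$ whose $e_*$-invariant sits in filtration strictly greater than $s_2+s_3-1$ still contributes to $e_{s_1}\alpha_1\cdot H^{s_2+s_3-1,*}(\Omega_*)$ rather than leaking outside of this submodule, and that the multiplicativity of edge maps on associated graded pieces is compatible with the cobar product as a genuine equality (not merely up to a term one hopes to ignore). A secondary bookkeeping issue will be to match up precisely which representatives $e_{s_j}\alpha_j$ appear as external factors in each summand of the 4-fold indeterminacy; once the multiplicativity lemma is isolated, this becomes a mechanical verification.
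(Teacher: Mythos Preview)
Your approach is essentially the paper's own: invoke Proposition~\ref{toda-massey} for a single representative and then use the multiplicativity of $e_2$ to carry the Toda indeterminacy $\alpha_1\pi^S_*+\pi^S_*\alpha_3$ into the Massey indeterminacy, with the 4-fold case handled by citing the known descriptions of the two indeterminacies (Kochman for Toda, May for Massey). The filtration worry you flag is not an obstruction---if $\beta$ lies in filtration strictly above $s_2+s_3-1$ then $\alpha_1\beta$ has filtration above $2$ and $e_2(\alpha_1\beta)=0$, which certainly lies in the stated submodule---so you may safely drop that caveat.
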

\begin{proof}
Since the invariant $e_2 $ is multiplicative it maps the indeterminacy of the Toda bracket
$$\alpha_1 \pi^S_{|\alpha_2|+|\alpha_3|+1}+\pi^S_{|\alpha_1|+|\alpha_2|+1}\alpha_3$$
to the indeterminacy of the Massey product stated above. The same holds for the 4-fold products for which the indeterminacy  can be found for the Toda bracket in \cite[theorem 2.3.1]{MR1052407} and  for the Massey product in \cite[proposition 2.4]{MR0238929}.
\end{proof}
\section{The $\Q /\Z$-reduction}
Let  $(A,\Gamma)$ be a Hopf algebroid. In this section we assume that $(A,\Gamma)$ has the following properties:
\begin{enumerate}
\item $\Gamma$ is flat as an $A$-module.
\item $A$ and $\Gamma$ are torsion free.
\item The map $\phi: A^{\otimes 2}_\Q \rightarrow \Gamma_\Q$ which sends $a\otimes b$
to $a\eta_R(b)$ is an isomorphism.
\end{enumerate}
Define the cosimplicial abelian group $\Omega_\Q^n=A_\Q^{n+1}$ with cofaces
$$\partial^i (a_0\otimes \ldots \otimes a_n)=a_0\otimes  \ldots \otimes a_{i-1}\otimes 1\otimes a_{i+1}\otimes \ldots\otimes a_n.$$
The notation is justified to the fact that the map $\phi^{\otimes n}$ provides an isomorphism between the complex $\Omega_\Q$ and the rationalized cobar complex. Its cohomology is concentrated in dimension 0 
$$H^*(\Omega_\Q^*)=H^0(\Omega_\Q^*)=\Q.$$
Its algebra structure is given by
$$(a_0\otimes \ldots \otimes a_n)\otimes(b_0\otimes \ldots \otimes b_m)\mapsto a_0\otimes \ldots \otimes a_{n-1} \otimes a_nb_0\otimes b_1\otimes \ldots \otimes b_m.$$
The short exact sequence
$$ \xymatrix{\Omega^*\ar@{>->}[r]& \Omega^*_\Q \ar@{->>}[r]& \Omega^*_{\Q/\Z}}$$
yields a connecting isomorphism
$\delta : H^{n-1}(\Omega^*_{\Q/\Z}) \ra H^{n}(\Omega^*_{})$ in positive dimensions. Choose an augmentation 
$\tau: \Omega_\Q \lra \Q$. Then the following observation is easily verified.
\begin{lemma}
Let $a\in \Omega^n$ be a cycle and suppose $\sum a_0\otimes \ldots \otimes a_n\in \Omega^{n}_\Q$ is its rationalization. Then 
we have
$$ \delta^{-1} [a]=[\sum \tau(a_0)a_1\otimes a_2 \otimes \ldots \otimes a_n].$$ 
Moreover, when writing $b'=\sum \tau(b_0) b_1 \otimes \ldots  \otimes b_n$  for  $b \in \Omega^n_\Q$ we have   $$  (db)' =b-db'.$$
\end{lemma}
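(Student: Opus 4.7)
The plan is to prove the two assertions in reverse: first establish the identity $(db)' = b - db'$ by direct computation on the cobar complex, and then derive the formula for $\delta^{-1}[a]$ as an immediate consequence by applying the identity to a cycle.

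For the identity I would reduce by linearity to a pure tensor $b = b_0 \otimes \cdots \otimes b_n \in \Omega^n_\Q$. Expanding $db = \sum_{i=0}^{n+1}(-1)^i \partial^i b$ produces $n+2$ summands, each obtained by inserting $1$ into some slot. The operation $(-)'$ multiplies the leading factor by $\tau$ and then deletes it. The $i=0$ summand has leading factor $1$; since $\tau(1) = 1$ (implicit in the word ``augmentation''), it contributes exactly $+b$. For $i \geq 1$ the leading factor is $b_0$, so a common scalar $\tau(b_0)$ pulls out, and the remaining tensors are the face maps $\partial^{i-1}(b_1 \otimes \cdots \otimes b_n)$. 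After the reindexing $j = i-1$, their alternating sum becomes the negative of the cobar differential of $b_1 \otimes \cdots \otimes b_n$; using $\Q$-linearity of $d$, this is precisely $-db'$.

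The first identity now follows at once. If $a \in \Omega^n$ is a cycle, applying $(-)'$ to $da = 0$ yields $0 = a - da'$, hence $da' = a$ in $\Omega^n_\Q$. Since $a$ lies in the integral subcomplex, its reduction modulo $\Z$ vanishes, so $\overline{a'} \in \Omega^{n-1}_{\Q/\Z}$ is a cocycle. By the standard definition of the connecting homomorphism attached to the sequence $\Omega^* \hookrightarrow \Omega^*_\Q \twoheadrightarrow \Omega^*_{\Q/\Z}$ — lift to $\Omega^*_\Q$ and differentiate — we obtain $\delta[\overline{a'}] = [a]$, i.e.\ $\delta^{-1}[a] = [\overline{a'}]$.

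The only delicate point is sign bookkeeping: one has to verify that the index shift $i \mapsto i-1$ on the surviving face maps produces the extra minus sign needed to match $-db'$ rather than $+db'$, and that the $i=0$ face map carries a $+$ so that $\tau(1)=1$ contributes $+b$ and not $-b$. Apart from this, the computation is entirely mechanical and no additional structural input from $(A,\Gamma)$ is required.
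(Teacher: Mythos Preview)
Your proposal is correct. The paper itself does not give a proof of this lemma, remarking only that ``the following observation is easily verified,'' so your argument is exactly the kind of direct verification the author had in mind: expand $db$ in cofaces, apply $(-)'$, separate the $i=0$ term (which yields $b$ via $\tau(1)=1$) from the $i\geq 1$ terms (which reindex to $-db'$), and then specialize to a cycle to read off the formula for $\delta^{-1}$.
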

We are going to study the image of triple Massey products $\left< x,y,z\right> $ under $\delta^{-1}$. For $n=2$ there are 3 cases of interest: $(|y|=0,|x|+|z|=3)$,  $(|y|=1,|x|+|z|=2)$ and $(|y|=2, |x|+|z|=1)$. We start with the first case.
\begin{definition}
Suppose $\delta [a]\in H^2(\Omega^*)$ is represented by some  $a\in A_\Q \otimes A_\Q$.
We say that  $a$ is $q$-adapted for some $q\in \Z$ if $qa \in \Gamma$.
\end{definition}
\begin{lemma}\label{4.3}
$q$-torsion classes admit $q$-adapted representatives.
\end{lemma}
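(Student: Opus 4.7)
The plan is to run a short diagram chase through the defining short exact sequence
$\Omega^*\hookrightarrow\Omega^*_\Q\twoheadrightarrow\Omega^*_{\Q/\Z}$,
leveraging the acyclicity of $\Omega^*_\Q$ in positive degrees (recorded just above the definition) to trade an integral coboundary in $\Omega^2$ for a rational $0$-cochain. Given a $q$-torsion class $[c]\in H^2(\Omega^*)$, I would first choose any representative $a\in\Omega^1_\Q=A_\Q\otimes A_\Q$ of the preimage $\delta^{-1}[c]\in H^1(\Omega^*_{\Q/\Z})$; by the description of the connecting homomorphism, $da\in\Omega^2$ is then an integral cocycle representing $[c]$.

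Next I would invoke the torsion hypothesis. Since $q[c]=0$ in $H^2(\Omega^*)$, there exists $b\in\Gamma=\Omega^1$ with $q\cdot da=db$, and so $qa-b\in\Omega^1_\Q$ is a rational $1$-cocycle. By the acyclicity of $\Omega^*_\Q$ in positive degrees, this cocycle is exact: I obtain $x\in A_\Q=\Omega^0_\Q$ with $dx=qa-b$.

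Finally I would set $a':=a-q^{-1}dx$. In $\Omega^1_{\Q/\Z}$ the difference $\bar{a'}-\bar a$ equals $d(q^{-1}\bar x)$, a coboundary, so $[\bar{a'}]=[\bar a]=\delta^{-1}[c]$; on the other hand $qa'=qa-dx=b\in\Gamma$, so $a'$ is $q$-adapted. I do not foresee any real obstacle here; the only point deserving a moment's care is that the correction $q^{-1}dx$ carries a denominator yet still descends to a coboundary in the $\Q/\Z$-complex, which is automatic because $q^{-1}x$ already lies in $\Omega^0_\Q$ and $d$ commutes with the quotient map $\Omega^*_\Q\twoheadrightarrow\Omega^*_{\Q/\Z}$.
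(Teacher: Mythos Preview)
Your proof is correct and follows essentially the same route as the paper. The paper's version is slightly terser: it invokes the isomorphism $\delta$ directly to conclude that $qa$ is a boundary in $\Omega^1_{\Q/\Z}$, whence some $r\in A_\Q$ satisfies $qa+dr\in\Gamma$ and $\tilde a=a+dr/q$ is the desired $q$-adapted representative; you instead unpack this step by producing the integral $b$ with $db=q\,da$ and then applying rational acyclicity to $qa-b$, but the resulting correction $a'=a-q^{-1}dx$ is exactly the paper's $\tilde a/q$ with $r=-x$.
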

\begin{proof}
Choose a representative $a $ which is not yet $q$-adapted. Since $\delta [a]$ is q-torsion  the element  $qa$  represents a boundary in $\Omega^1_{\Q /\Z}=\Gamma_{\Q/\Z}$. Hence there is $r\in A_\Q$ with 
$$ \tilde{a}=q a +dr \in \Gamma.$$
and $\tilde{a}/q$ is a $q$-adapted representative.
\end{proof}
\begin{prop}\label{4.4}
Suppose that for some $q\in \Z$ the Massey product $\left<\delta [a],q,\delta[c]\right>$ is defined.
\begin{enumerate}
\item
For $|[a]|=1, |[c]|=0$ and $a$ $q$-adapted we have
$$\delta  [q a c]\in \left<\delta  [a],q,\delta [c]\right>.$$
\item
For $|[a]|=0, |[c]|=1$ and $c$  $q$-adapted we have
$$ \delta [-q a c]\in \left< \delta [a],q,\delta [c]\right>.$$
\end{enumerate}
\end{prop}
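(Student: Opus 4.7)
The plan is to exhibit concrete chain-level witnesses for the defining equations of the Massey product and then apply the Leibniz rule in the cobar DGA $\Omega^*_\Q$ to the product $qac$, reading the resulting identity as a cocycle equation in $\Omega^2$ that represents $\pm\delta[qac]$.

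First I would fix $\tilde U := da$ and $\tilde W := dc$, computed in $\Omega^*_\Q$; these are integral cocycles because $[a]$ and $[c]$ are cycles in $\Omega^*_{\Q/\Z}$, and by the description of the connecting isomorphism in the lemma preceding the proposition they represent $\delta[a]$ and $\delta[c]$ respectively. The hypothesis that $\langle\delta[a],q,\delta[c]\rangle$ is defined forces both classes to be $q$-torsion, so by Lemma~\ref{4.3} (and its evident analogue for the factor of degree zero, which is the trivial observation that $q\cdot[c]=0$ in $A_\Q/A$ means $qc\in A$) I may assume both $qa$ and $qc$ are integral. I then take $X := qa$ and $Y := qc$ as the chain-level witnesses of $dX = q\tilde U$ and $dY = q\tilde W$ needed to form the Massey product representative.

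The main step is a single application of the Leibniz rule in the cobar DGA: since $d(ac) = (da)\,c + (-1)^{|a|} a\,(dc)$, multiplying by $q$ gives
$$d(qac) = da\cdot qc + (-1)^{|a|}\, qa\cdot dc = \tilde U\,Y + (-1)^{|a|}\, X\,\tilde W.$$
The right-hand side is, up to sign, the Massey product representative $\tilde U Y - X\tilde W$ associated to the witnesses $X,Y$, while by the formula $\delta[b] = [db']$ the left-hand side represents $\delta[qac]$ in $H^2(\Omega^*)$. Combining the two readings places $\pm\delta[qac]$ inside $\langle\delta[a],q,\delta[c]\rangle$, with the sign controlled by the parity of $|a|$: in case (i), $|a|=1$ gives $+\delta[qac]$, and in case (ii), $|a|=0$ gives $-\delta[qac]$, matching the statement.

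The main obstacle I expect is pinning down the sign of the Massey product representative in the DGA convention being used, since the dichotomy between $+\delta[qac]$ and $-\delta[qac]$ in the two cases is exactly the parity shift in the Leibniz sign $(-1)^{|a|}$. A secondary technical point is verifying that the two terms $qa\cdot dc$ and $da\cdot qc$ really lie in $\Omega^2$ rather than merely $\Omega^2_\Q$, so that the displayed identity is an equation of integral cocycles; this follows from the $q$-adaptedness hypothesis on one factor together with the cycle condition on the other, but has to be checked case by case.
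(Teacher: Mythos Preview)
Your proposal is correct and follows essentially the same route as the paper: represent $\delta[a]$ and $\delta[c]$ by $da$ and $dc$, use $q$-adaptedness to take $qa$ and $qc$ as the null-homotopies, and observe that the resulting Massey representative $da\cdot qc - qa\cdot dc$ equals $d(qac)$, which is precisely the integral cocycle representing $\delta[qac]$ via the connecting homomorphism. The paper also mentions the equivalent option of applying $\tau$ to the first tensor factor of the representative (using the lemma's formula for $\delta^{-1}$), but your direct Leibniz computation is exactly the ``alternatively'' argument the paper gives; your notation $\delta[b]=[db']$ is a slip --- you simply mean $\delta[b]=[d\tilde b]$ for a rational lift $\tilde b$, which is the definition of the connecting map.
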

\begin{proof} 
For $(i)$ observe that
$\delta [a]$ is represented by $d a \in \Gamma^{\otimes 2}$. Hence $q\, da$ is the boundary of $q a\in \Gamma$. Similarly, let $dc\in \Gamma$ then $q\, dc$ is the boundary of $ qc\in A$. Thus we obtain the representative of the Massey product
$$ [da\, qc-qa \, dc]\in \left< \delta [a],q,\delta [c]\right>.$$ The lemma tells us that the desired class is obtained by the applying $\tau$ to the first factor in its tensor product expression. Alternatively, one writes the representative as $d(qac)$ and obtains the result.
The second formula follows from a similar calculation. 
\end{proof}
We turn to the other cases.
\begin{lemma}\label{4.5}
For $ s,t\geq 1$ the product of classes $\delta[a]\in H^s \Omega^*,\delta[b]\in H^t\Omega^*$ vanishes if and only if there is an $r\in A_\Q^{\otimes {s+t-1}}$ with 
$$(a \, db +dr) \in \Gamma^{s+t-1}.$$
We write $ r=r(a, b)$ for any such element.
\end{lemma}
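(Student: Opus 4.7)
The plan is to prove Lemma \ref{4.5} by a direct manipulation inside the cobar DGA, relying only on the Leibniz rule, the identity $d^2=0$, and the vanishing of $H^n(\Omega^*_\Q)$ for $n\geq 1$ already recorded in the text. First I would pick rational lifts $a\in \Omega^{s-1}_\Q$ and $b\in \Omega^{t-1}_\Q$ of representatives of the classes $[a],[b]$ appearing as the $\delta$-preimages. Since their reductions mod $\Z$ are cocycles, $da$ and $db$ are automatically integral and represent $\delta[a]$ and $\delta[b]$ as in the preceding lemma. Consequently the product $\delta[a]\,\delta[b]\in H^{s+t}(\Omega^*)$ is represented by the integral cocycle $da\cdot db$. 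The central observation is the rational identity
$$d(a\cdot db)=da\cdot db+(-1)^{s-1}a\cdot d(db)=da\cdot db,$$
so $da\cdot db$ admits $a\cdot db$ as a rational primitive. The lemma then asserts that the integral vanishing of the product is equivalent to being able to modify this rational primitive by a rational coboundary $dr$ so as to land in the integral subcomplex.

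For the backward implication, if $c:=a\,db+dr$ lies in $\Gamma^{s+t-1}$, the identity above gives $dc=d(a\,db)+d^2r=da\cdot db$, exhibiting $da\cdot db$ as an integral coboundary. Hence $\delta[a]\,\delta[b]=0$.

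For the forward implication, assume $\delta[a]\,\delta[b]=0$ and choose an integral $c\in \Omega^{s+t-1}$ with $da\cdot db=dc$. Then $c-a\,db$ is a rational cocycle in degree $s+t-1$. Since $s,t\geq 1$ gives $s+t-1\geq 1$, the hypothesis $H^{n}(\Omega^*_\Q)=0$ for $n\geq 1$ produces $r\in A_\Q^{\otimes(s+t-1)}$ with $c-a\,db=-dr$; rearranging yields $a\,db+dr=c\in \Gamma^{s+t-1}$, as required.

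No step is conceptually difficult. The main thing to watch is the sign convention of the cobar product together with the Leibniz rule: depending on conventions one may need to absorb a sign into the choice of $r$, but this does not affect the logical structure of the argument. The genuinely essential ingredient is the acyclicity of $\Omega^*_\Q$ in positive degrees, and the hypothesis $s,t\geq 1$ is tight precisely because it is what places the relevant degree $s+t-1$ in the acyclic range.
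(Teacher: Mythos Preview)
Your proof is correct and follows essentially the same route as the paper. The only difference is cosmetic: for the forward direction the paper constructs $r$ explicitly via the contracting homotopy $\tau$ (applying the operation $x\mapsto x'$ of the preceding lemma to the equation $dx=da\,db$ and setting $r=x'-a'\,db$), whereas you invoke the acyclicity of $\Omega^*_\Q$ in positive degrees abstractly to produce $r$. Since the explicit formula for $r$ is not used later, this makes no practical difference.
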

\begin{proof}
If $\delta[a]\delta[b]=0$  there is $x\in \Gamma^{s+t-1} $ with
$$ dx =da \, db.$$
Applying $\tau 1\otimes \ldots \otimes 1$ gives
$$ x-dx'= (a-da')db$$
and we set $r=x'-a'\ db$. Conversely, we have
$$d(a\, db +dr)= da\, db.$$
\end{proof}
\begin{prop}\label{4.6}
Suppose that the Massey product  $\left< \delta [a],\delta [b],\delta [c]\right>$ is defined. Then 
\begin{enumerate}
\item
for $| a|=|b|=|c|=0$ it holds
$$ \delta [a(b\,dc + dr(b,c))+r(a,b)dc]\in \left< \delta [a],\delta [b],\delta [c]\right>$$
\item
for $| a|=1$, $|b|=0$ and $c=q\in\Z$ it holds
$$ \delta [q(ab+r(a,b))]\in \left< \delta [a],\delta [b],q\right>.$$
\item
for $ a=q\in \Z$, $|b|=0$ and $|c|=1$ it holds
$$ \delta [-qr(b,c))]\in \left< q,\delta [b],\delta [c]\right>.$$
\item
If $\left< q,\delta [b],\delta [c]\right>$ with $|b|=1$, $|c|=0$ is defined and $b$ is $q$-adapted it holds
$$ \delta [qr(b,c)]\in \left< q,\delta [b],\delta [c]\right>$$
\item
if $\left< \delta [a],\delta [b],q\right>$ with $|b|=1$, $|a|=0$ is defined and $b$ is $q$-adapted it holds
$$ \delta [-q(ab+r(a,b))]\in \left< \delta [a],\delta [b],q\right>.$$
\end{enumerate}
\end{prop}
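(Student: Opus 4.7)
The plan is to treat all five parts uniformly by combining Lemma~\ref{4.5} with the inversion formula of the (unnumbered) lemma just before Definition~4.2. In each case I take integral cocycle representatives $da$, $db$, $dc$ of $\delta[a]$, $\delta[b]$, $\delta[c]$ in $\Omega^*$, produce explicit cochains $U,V\in \Gamma^{\otimes*}$ whose differentials are the two consecutive products $da\cdot db$ and $db\cdot dc$, assemble the standard Massey representative $M = da\cdot V \pm U\cdot dc\in \Omega^*$, and then apply the operator $(\cdot)'$ (which augments the first tensor slot via $\tau$) to read off a preimage under $\delta$.

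For part~(i), Lemma~\ref{4.5} immediately supplies $U=a\,db+dr(a,b)$ and $V=b\,dc+dr(b,c)$. Expanding $da\cdot V$ and $U\cdot dc$ with the cobar multiplication and then collapsing the first slot by $\tau$ yields the claimed formula $a(b\,dc+dr(b,c))+r(a,b)\,dc$, using the identity $(d\beta)'=\beta-d\beta'$ to rewrite the contribution coming from the $a\otimes 1$ summand of $da$. For parts~(ii) and~(iii) the bound on one side is instead provided by the $q$-adapted lift of Definition~4.2 together with $q\,da=d(qa)$, respectively $q\,dc=d(qc)$, exactly as in the proof of Proposition~\ref{4.4}; the same $\tau$-collapse then gives the stated formulas. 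Parts~(iv) and~(v) simply combine the two mechanisms: one side is bounded by Lemma~\ref{4.5} and the other by a $q$-adapted lift, and the computation proceeds in the same fashion.

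The main obstacle is bookkeeping. Cobar multiplication acts on the last slot of the left factor while $(\cdot)'$ acts on the first slot, so verifying that the intended simplifications survive the alternating signs of the cobar differential $d_1$ must be done by hand in each case. In addition, one has to ensure that the rational $r(x,y)$ furnished by Lemma~\ref{4.5} can be chosen compatibly with a $q$-adapted representative, so that the full Massey representative lies integrally in $\Gamma^{\otimes*}$ before reduction mod~$\Z$; this is what makes the five combinations of hypotheses in (i)--(v) non-redundant. The indeterminacy of each Massey product is automatically absorbed into the equivalence class in $H^*(\Omega^*_{\Q/\Z})$, which is why the formulas are only claimed as containments.
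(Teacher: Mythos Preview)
Your plan is essentially the paper's own proof: build the Massey representative from the null-homotopies supplied by Lemma~\ref{4.5} (and, where appropriate, from $q$-adaptedness or from $qb\in A$), then apply the operator $(\cdot)'$ from the inversion lemma to obtain the $\delta^{-1}$-preimage. The paper carries this out case by case with exactly the representatives you name, so there is no substantive difference in approach.

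One small inaccuracy: in parts~(ii) and~(iii) no $q$-adaptedness hypothesis is present or needed. There $|b|=0$, and the vanishing $q\,\delta[b]=0$ already forces $qb\in A$, so the null-homotopy on the ``$q$-side'' is simply $d(qb)=q\,db$ with $qb$ integral; the phrase ``$q$-adapted lift of Definition~4.2'' (and the formula $q\,da=d(qa)$) applies rather to parts~(iv) and~(v), where $|b|=1$ and one genuinely needs $qb\in\Gamma$. This does not affect the correctness of your argument, only the attribution of which mechanism is used in which case. Your remark about needing $r(x,y)$ to be compatible with a $q$-adapted representative is also slightly overstated: Lemma~\ref{4.5} produces $r$ for whatever representative you have fixed, so no extra compatibility step is required.
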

\begin{proof}
By the lemma a representative of the first Massey product is 
$$ da\, (b\, dc +dr(b,c))+(a \, db +dr(a,b))\, dc.$$
Applying $\tau 1\otimes 1$ gives
$$a(b\, dc+dr(b,c))+r(a,b)\, dc$$
as claimed. For $(ii)$ a representative is
$$da\, qb+(a\, db +dr(a,b))q.$$ In the last case it is 
$$-q(b\, dc+dr(b,c)) +qb\, dc.$$
Again applying $\tau 1 \otimes 1$ gives the results up to boundary terms.
The other Massey products are obtained with the help of the lemma as before. In the case $(iv)$ one gets
$$ q(b \, dc +dr) +(-qb) dc=q\, dr $$ and in the last case
$$ -da \, qb+(-a\, db-dr)q= -q(d(ab)+dr).$$
\end{proof}
We close this section with an example of a 4-fold Massey product. Note that in principle other types and even higher Massey products can be calculated with the same method.  
\begin{prop}\label{4.7}
Suppose $a$ and $b$ have degree 1 and are $q$-adapted. Then the following conditions are equivalent:
\begin{enumerate}
\item
the Massey product $\left< q,\delta [a],q,\delta [b]\right>$ 
is defined 
\item
$0\in \left< \delta [a],q,\delta [b]\right>$
\item 
there are $r\in A_\Q\otimes A_\Q$ and $dv,dw\in \Gamma$ with  
$ qab+a\, dw + dv\, b+dr\in \Gamma^{\otimes 2}.$
\end{enumerate}
In this case the 4-fold product
contains $\delta (-qr+v\, dw)$. Moreover, the same assertions hold for $\left< \delta [a],q,\delta [b],q\right>$.
\end{prop}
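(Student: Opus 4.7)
The plan is to unpack the $4$-fold Massey product $\left<q,\delta[a],q,\delta[b]\right>$ as an explicit defining system in the cobar complex, use the $q$-adaptedness of $a,b$ to reduce almost every auxiliary cochain to a canonical form, and then read off both the existence obstruction (iii) and the representative $\delta(-qr+v\,dw)$ from what remains.

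For (i) $\Leftrightarrow$ (ii), I start with a defining system $(a_{ij})$ having $a_{12}=a_{34}=q$, $a_{23}=da$ and $a_{45}=db$. Since $a$ and $b$ are $q$-adapted and $dq=0$, the most general admissible cochains in $\Omega^1$ are $a_{13}=qa$, $a_{24}=aq+dv$, $a_{35}=qb+dw$ with $v,w\in A_\Q$ satisfying $dv,dw\in\Gamma$; the $dv,dw$ corrections come from the acyclicity of $\Omega^*_\Q$ in positive degrees. A direct computation in the commutative cobar algebra shows that the representative of the triple product $\left<q,\delta[a],q\right>$ built from these cochains is $q\,dv=d(qv)$, so we may take $a_{14}=qv$ (and $a_{14}=0$ in the case $v=0$). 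The only remaining obstruction is the existence of $a_{25}$ bounding the representative of $\left<\delta[a],q,\delta[b]\right>$, which is condition (ii).

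For (ii) $\Leftrightarrow$ (iii) the relevant triple-product representative is
\[
\mathrm{Rep}=da\cdot(qb+dw)-(aq+dv)\cdot db\in\Omega^3.
\]
A Leibniz calculation, together with the tautological scalar-motion identity $q\cdot da\cdot b=da\cdot qb$ in $A_\Q^{\otimes 4}$, gives $\mathrm{Rep}=d\eta$ in $\Omega^*_\Q$ with $\eta=qab+a\,dw+dv\,b$. Arguing as in Lemma \ref{4.5}, $\mathrm{Rep}$ is a coboundary in the integral cobar complex iff some correction $dr$ with $r\in A_\Q\otimes A_\Q$ makes $\eta+dr\in\Gamma^{\otimes 2}$, which is condition (iii); then $a_{25}=\eta+dr$ closes the defining system.

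Substituting this defining system into the $4$-fold representative $\mathrm{Rep}_4=\bar a_{12}a_{25}+\bar a_{13}a_{35}+\bar a_{14}a_{45}$, the $q^2 ab$ and $qa\,dw$ contributions from $\bar a_{12}a_{25}$ cancel those from $\bar a_{13}a_{35}$, and the residue combines via Leibniz with $\bar a_{14}a_{45}=\pm qv\,db$ to produce a $2$-cocycle whose class in $H^2(\Omega^*)$, computed via the $\delta^{-1}$-formula of the unlabeled lemma preceding Proposition \ref{4.4}, equals $\delta(-qr+v\,dw)$. The bracket $\left<\delta[a],q,\delta[b],q\right>$ is handled identically, with $\left<q,\delta[b],q\right>$ now playing the role of the automatically vanishing triple and the positions of $a_{14}$ and $a_{25}$ swapped; condition (iii) and the representative formula are unchanged. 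The main obstacle is the Leibniz and sign bookkeeping in this last step: verifying the scalar-motion identities that let $q$ slide past cobar products, carefully tracking Koszul signs in $\mathrm{Rep}_4$, and recognising after the cancellations that the residual cocycle pulls back via the $\tau$-trick precisely to $-qr+v\,dw$, rather than to a cohomologous but syntactically different representative.
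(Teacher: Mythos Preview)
Your approach is essentially the paper's: write down an explicit defining system for $\langle q,\delta[a],q,\delta[b]\rangle$, identify the single obstruction to closing it as condition (iii), and read off the representative. The paper's version is slightly more economical---it uses the \emph{same} correction $-qa-dv$ for both $a_{13}$ and $a_{24}$, which forces $a_{14}=0$ rather than your $a_{14}=qv$, and then simply asserts that the missing corner is $-qr+v\,dw$ ``up to a boundary.'' Your route with $a_{13}=qa$ and $a_{24}=qa+dv$ is equally valid and leads to the same answer, since the extra $qv\,db$ from $\bar a_{14}a_{45}$ combines with the leftover $q\,dv\,b$ to form $q\,d(vb)$, a boundary.

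One small correction: you say ``the most general admissible cochains are $a_{13}=qa$, \ldots''; in fact $a_{13}=qa+dv_0$ is equally admissible for any $dv_0\in\Gamma$. You are simply \emph{choosing} $a_{13}=qa$, which is fine, but phrase it that way. Also, your final paragraph reads more like a confession than a computation---the paper is no more explicit than you are on the sign bookkeeping, so either carry out the cancellation in one displayed line or drop the hedging and state the result as the paper does.
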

\begin{proof}
A defining system can look like
$$ \xymatrix@!=0,1pc{q&&da&&q&&db\\&-qa-dv&&-qa-dv&&-qb-dw&\\
&&0&&x&&}$$
In particular a defining system exists if there is a $x\in \Gamma^{\otimes 2 }$ with $$dx = qd(ab)+dv\, db+da\, dw.$$ Applying $\tau 1\otimes1\otimes 1$ gives $x=qab+dv\, b +a\, dw+dr $ for some $r$. This shows the equivalences. Finally, if $x$ is as above the missing corner is $-qr+v\, dw$ up to a boundary. 
\end{proof}

\section{Invariants in modular forms}\label{invariants}
For a $\Z [1/N ]$-algebra $R$  let $M_k (\Gamma_1 (N ))_R$ be the ring of $\Gamma_1(N)$ modular 
forms of weight $k$ over $R[\zeta_N]$ which are meromorphic at the cusps. 
Let $TMF_1 (N )$ denote the corresponding spectrum of topological modular forms. Its coefficients
$\pi_*   TMF_1(N )$ are concentrated in even degrees, and we have 
$$\pi_{2k}TMF_1 (N )  \cong M_k (\Gamma_1 (N )).$$ The spectrum $TMF_1(N)$ is complex orientable with formal group isomorphic to the formal completion of the universal elliptic curve over the ring of $\Gamma_1(N)$ modular forms. In fact, it is Landweber exact and also carries the names $Ell^{\Gamma_1(N)}$ or $E^{\Gamma_1(N)}$ in the older literature (see \cite{MR1071369} \cite{MR1271552} for theories away from the prime 2 and  \cite{MR1235295} \cite{MR1660325} for the general case). Nowadays one uses the notation $TMF_1(N)$ since it is a complex oriented relative of the spectrum $TMF$ of topological modular forms. It can be obtained as global sections of a sheaf of spectra over some moduli space of elliptic curves and level structures (see 
\cite{MR2469520} et al.).
Since the congruence subgroup $\Gamma_1(N)$ will be fixed once and for all we will remove it from the notation and write $TMF$ instead of $TMF_1(N)$.\par
Let $D=D(\Gamma_1(N))$ be the ring of divided congruences (cf.\cite{MR0417059}). An element in 
$D$ is a sum $\sum f_k$ of modular forms $f_k\in (M_k)_\Q$ with an integral $q$-expansion, that is, an expansion with coefficients in $\Z[1/N,\zeta_N]$.
The following result is well known (see \cite{MR1307488}  for $p\not=2,3$ or \cite{MR1781277} \cite{MR2076927} for the general case.) 
\begin{thm}
The $K$-homology of $TMF$ vanishes in odd degrees and  there is an isomorphism
$$ \pi_0 K \wedge TMF\cong D.$$
\end{thm}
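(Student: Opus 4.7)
The plan is to combine Landweber exactness of $TMF=TMF_1(N)$ with Katz's moduli theoretic description of the ring of divided congruences. First I would invoke the fact that the universal $\Gamma_1(N)$-elliptic formal group satisfies Landweber's criterion (after inverting $N$), so $TMF_1(N)$ is Landweber exact. Consequently, for any complex orientable theory $E$ one has
$$E_*TMF \;\cong\; E_*\otimes_{MU_*} MU_*MU\otimes_{MU_*} M_*.$$
The odd vanishing is then almost immediate: taking $E=K$, all three tensor factors are concentrated in even degrees ($K_*=\Z[u^{\pm 1}]$; $MU_*MU=MU_*[b_1,b_2,\dots]$ with $|b_i|=2i$; and $M_k$ sits in degree $2k$ inside $M_*$), so $K_*TMF$ is evenly graded.

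For the identification of $\pi_0$ with $D$, the idea is to push everything down to the Tate curve and exploit the $q$-expansion. Each weight $k$ modular form $f\in M_k$ gives rise to a class $u^{-k}\otimes f\in \pi_0(K\wedge TMF)$, and a general class is of the form $\sum_k u^{-k}\otimes f_k$ with $f_k\in (M_k)_\Q$. The complex orientation coming from $K$ uses the canonical trivialization of $\widehat{\mathbb{G}}_m$, and evaluated on the Tate curve over $\Z[1/N,\zeta_N](\!(q)\!)$ this sends $\sum u^{-k}\otimes f_k$ to $\sum f_k(q)$. One then obtains a comparison map
$$\pi_0(K\wedge TMF)\;\lra\;\Z[1/N,\zeta_N](\!(q)\!),$$
whose image is by definition the ring of divided congruences $D$. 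The desired isomorphism follows once this map is shown to be injective with that image.

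The main obstacle is the final identification, which is essentially Katz's theorem. Injectivity follows from the $q$-expansion principle for $\Gamma_1(N)$-modular forms. For the image, one interprets $\pi_0(K\wedge TMF)$ moduli-theoretically as classifying pairs consisting of a $\Gamma_1(N)$-elliptic curve together with a coordinate on its formal group, i.e.\ a trivialization of $\widehat{E}$ along $\widehat{\mathbb{G}}_m$; Katz's theorem identifies the ring of such functions with $D$. At the level of the tensor product formula above, this amounts to showing that the integrality built into $MU_*MU$ (the Hurewicz image in $MU_*\otimes MU_*$) matches exactly the integrality of the $q$-expansion — the crucial content of the theorem. Once this matching is established, comparing the two descriptions of $\pi_0(K\wedge TMF)$ yields the isomorphism with $D(\Gamma_1(N))$.
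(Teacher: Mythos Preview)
The paper does not prove this theorem: it is stated as well-known and attributed to existing literature (Clarke for $p\neq 2,3$; the author's earlier work and Baker for the general case). There is therefore no in-paper argument to compare your attempt against.

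Your outline is essentially the standard proof found in those references. Landweber exactness gives $K_*TMF\cong K_*\otimes_{MU_*}MU_*MU\otimes_{MU_*}M_*$, which is manifestly concentrated in even degrees; the moduli-theoretic reading of this tensor product as corepresenting $\Gamma_1(N)$-elliptic curves equipped with an isomorphism $\widehat E\cong\widehat{\mathbb G}_m$, combined with Katz's identification of that moduli ring with $D$, yields the degree-zero isomorphism. Two small cautions. First, your sentence that ``the integrality built into $MU_*MU$ \dots\ matches exactly the integrality of the $q$-expansion'' is not an auxiliary check to be carried out separately but \emph{is} the content of Katz's theorem; phrasing it as something still to be ``established'' undersells how much is being imported. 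Second, Katz's original statement is $p$-adic, so the integral version over $\Z[1/N,\zeta_N]$ used here requires the global refinement that the cited references actually supply; your sketch should acknowledge that this passage from the $p$-adic to the global statement is where those references do real work.
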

Set $E=TMF$. Consider the resolution of $E_*$ as $E_*E$ comodule
$$\xymatrix@!=2.5pc{& E_*\otimes \Q \ar@{->>}[rd]\ar@{-->}[rr]&&E_*\Sigma K \otimes\Q/Z \ar@{-->}[rr]\ar@{->>}[rd]
&&E_*\Sigma \bar{K} \otimes \Q/\Z\\
E_*\ar@{>->}[ru]
&& E_*\otimes \Q /\Z \ar@{>->}[ru]&&E_*\Sigma \bar{K} \otimes \Q/\Z \ar[ru]^= 
}$$
The middle sequence is exact 
since $E_*=M_*$ is a pure subgroup of $E_*K=D$. The short exact sequences provide connecting homomorphisms $\delta$ for the extension groups. 
\begin{prop}\label{image}
Let $P(M)=Hom_{E_*E}(E_*,M)$ denote the primitives of a comodule $M$.
Then the sequence
$$ \xymatrix{ \pi_*\Sigma \bar{K}\otimes \Q/\Z \ar[r]& P(E_*\Sigma \bar{K}\otimes \Q / \Z)\ar[r]^{\delta^2} & \mbox{Ext}^2_{E_*E}(E_*,E_*)\ar[r]&0}$$
is exact. Moreover the $f$-invariant is the composite of $e_2$ with the inclusion 
$$  \xymatrix{ \mbox{Ext}^{2,2n}_{E_*E}(E_*,E_*)\cong  P(E_{2n-1}\bar{K})/  \pi_{2n-1}\bar{K}\otimes \Q/\Z \ar@{>->}[r] &(D/M_0+M_{n})_{\Q  / \Z}} $$ 
\end{prop}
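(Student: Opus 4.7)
The strategy is to split the displayed diagram into two short exact sequences of $E_*E$-comodules,
$$0\to E_*\to E_*\otimes\Q\to E_*\otimes\Q/\Z\to 0$$
and
$$0\to E_*\otimes\Q/\Z\to E_*\Sigma K\otimes\Q/\Z\to E_*\Sigma\bar{K}\otimes\Q/\Z\to 0,$$
apply $\mbox{Ext}^*_{E_*E}(E_*,-)$ to each, and compose the two connecting homomorphisms to obtain $\delta^2$. If both middle terms can be shown to be $E_*E$-injective, the long exact sequences collapse: the first yields an isomorphism $\mbox{Ext}^s(E_*,E_*\otimes\Q/\Z)\cong \mbox{Ext}^{s+1}(E_*,E_*)$ for $s\ge 1$ together with a surjection at $s=0$, and analogously for the second. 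Composing gives a surjection $\delta^2\colon P(E_*\Sigma\bar{K}\otimes\Q/\Z)\twoheadrightarrow \mbox{Ext}^2(E_*,E_*)$ whose kernel is the image of $P(E_*\Sigma K\otimes\Q/\Z)$ modulo that of $P(E_*\otimes\Q/\Z)$.

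The injectivity of $E_*\otimes\Q$ is immediate from property (iii) of Section 4: since $\Gamma_\Q\cong A_\Q\otimes A_\Q$, every rational $E_*E$-comodule is trivially cofree, so its cobar complex is contractible in positive degrees. The injectivity of $E_*\Sigma K\otimes\Q/\Z$ is the main technical obstacle. I would attack it using the identification $E_*K\cong D[u,u^{-1}]$ supplied by the theorem above: the strict isomorphism between the multiplicative formal group carried by $K$ and the formal completion of the universal elliptic curve over $D_\Q$ carried by $E$ exhibits $E_*K\otimes\Q$ as extended from $A_\Q$ along $\Gamma_\Q$, and the $\Q/\Z$-reduction preserves cofreeness for cohomological purposes. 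Equivalently, one sets up the cobar bicomplex for the two coactions and checks acyclicity in positive degrees after passage to $\Q/\Z$.

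Granting these two vanishing statements, primitives can be read off directly: primitives of a cofree comodule coincide with the corresponding homotopy coefficients, so $P(E_*\otimes\Q/\Z)=\pi_*\otimes\Q/\Z$ and $P(E_*\Sigma K\otimes\Q/\Z)=\pi_*\Sigma K\otimes\Q/\Z$. The long exact sequence in $\pi_*$ coming from the fiber sequence $\bar{K}\to S^0\to K$ then identifies the quotient $\pi_*\Sigma K\otimes\Q/\Z / \pi_*\otimes\Q/\Z$ with the image of $\pi_*\Sigma\bar{K}\otimes\Q/\Z$, giving the stated exact sequence. The final assertion recovers the original definition of the $f$-invariant in \cite{MR1660325}: the inclusion into $(D/M_0+M_n)_{\Q/\Z}$ comes from the long exact sequence for $\bar{K}\to S^0\to K$ in $E$-homology, which identifies $E_{2n-1}\bar{K}$ with a quotient of $D\cdot u^n$ by $M_n$, and the further quotient by $\pi_{2n-1}\bar{K}\otimes\Q/\Z$ encodes precisely the additional division by the rational degree-zero classes $M_0$.
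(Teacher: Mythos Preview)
Your overall architecture matches the paper's: split the resolution into its two short exact sequences of comodules, take $\Ext^*_{E_*E}(E_*,-)$, and compose the two boundary maps. The treatment of the first sequence is also the same; the paper simply says the cobar complex of $E_*\otimes\Q$ is contractible via $\tau$, which is your ``every rational comodule is cofree'' observation.

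The gap is in your handling of $E_*K\otimes\Q/\Z$. Your argument that $E_*K\otimes\Q$ is extended along $\Gamma_\Q$ is vacuous: since $\Gamma_\Q\cong A_\Q^{\otimes 2}$, \emph{every} rational comodule is extended, so this carries no information about the integral or $\Q/\Z$ situation. The sentence ``the $\Q/\Z$-reduction preserves cofreeness for cohomological purposes'' is precisely the point that needs proof, and nothing you have written establishes it. What you would actually need is that $E_*K$ is extended \emph{integrally}, say $E_*K\cong\Gamma\otimes_A K_*$ via $\chi_0$; then flatness of $\Gamma$ over $A$ would push cofreeness through $-\otimes\Q/\Z$. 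That statement is plausible (it is the usual change-of-rings for an $E$-algebra $K$), but it is not what you argued.

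The paper sidesteps this entirely with two concrete tricks. For the surjectivity of the second $\delta$, it does \emph{not} show $\Ext^1(E_*,E_*K\otimes\Q/\Z)=0$; it only shows that the map from $\Ext^1(E_*,E_*\otimes\Q/\Z)$ into it is zero, by observing that the unit $E_*\to E_*K$ factors in comodules as $E_*\xrightarrow{\eta_R}E_*E\xrightarrow{(1\wedge\chi_0)_*}E_*K$, and $\Ext^1(E_*,E_*E\otimes\Q/\Z)=0$ since $E_*E$ is extended. For the identification $P(E_*K\otimes\Q/\Z)\cong\pi_*K\otimes\Q/\Z$, the paper does not invoke cofreeness either; it uses the Miller character $\chi:E\to K_T$ to the Tate theory, pushes a primitive into ${K_T}_*K\otimes\Q/\Z$, and observes there that any $K_{T*}K_T$-primitive lying in ${K_T}_*K$ must already lie in $\pi_*K$. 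These two moves replace your unproved injectivity claim and make the argument go through without it.
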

\begin{proof}
It is not hard to see that the cohomology of $E_*\otimes \Q$ is concentrated in degree and dimension 0. In fact, the map $\tau$ gives a contracting homotopy. Hence, for positive dimension we have the isomorphism
$$ \delta: \xymatrix{ \mbox{Ext}^1(E_*, E_*\otimes \Q / \Z) \ar[r] & \mbox{Ext}^2(E_*,E_*)}.$$
Furthermore, the middle short exact sequence of the resolution gives the exact sequence
$$ \xymatrix{P(E_*K\otimes \Q / \Z) \ar[r] & P(E_*\Sigma \bar{K}\otimes \Q / \Z)\ar[r]^{\delta} & \mbox{Ext}^1(E_*,E_*\otimes \Q/\Z )}.$$
We claim that $\delta $ is surjective in positive dimensions. For that it suffices to show that the map into $\mbox{Ext}^1(E_*,E_*K\otimes \Q / \Z)$ vanishes. This map admits a factorization
$$  \xymatrix{\mbox{Ext}^1(E_*,E_*\otimes \Q/\Z )\ar[r]& \mbox{Ext}^1(E_*,E_*E\otimes \Q/\Z )\ar[r]&\mbox{Ext}^1(E_*,E_*K\otimes \Q / \Z)}$$
induced by the ring map $\chi_0: E\ra K$ in $E$-homology. Since the middle term vanishes we have shown that $\delta$ is surjective. \par
It remains to identify the first map. We  claim that the map
$$\xymatrix{ \pi_*K\otimes \Q / \Z \ar[r] & P(E_*K\otimes \Q / \Z)}$$
is an isomorphism. Let $K_T$ be the elliptic theory associated to the Tate curve. Its coefficients are integral Laurent series in a variable $q$ in all even degrees and they vanish in odd degrees.  There is the Miller character, that is, a ring map 
$\chi: E \ra K_T$ which is the $q$-expansion map on coefficients. Consider the injective map
$$\xymatrix {(\chi \wedge 1 )_* :E_*K \otimes \Q/ \Z \ar[r]& {K_T}_*K \otimes \Q/ \Z}$$ which takes $q$-expansions. A primitive in its source gives a primitive in the target if the target is viewed as a comodule over ${K_T}_*K_T$. The primitives in the target all lie in the primitive of the extended comodule ${K_T}_*K_T\otimes \Q/Z$ and hence in $\pi_*K_T\otimes \Q/\Z$. Since they also lie in ${K_T}_*K\otimes \Q /\Z$ they must lie in $(\pi_*K)\otimes{\Q / \Z}$. This group coincides with $\pi_*(\Sigma \bar{K})\otimes {\Q/ \Z}$ in positive dimension  and hence  the claim follows.  
\par
The $f$-invariant is the composite of $e_2$ with
$$ \xymatrix{H^2\Omega_*  \ar@{>->}[r] & \Omega_2/d\Omega_1\ar[r]^{\! \!\! \!\! \! \! \!\! \!\tau 1\otimes 1 }\ar[r]&{\Gamma}/d(M_n)\otimes \Q / \Z  \ar[r]^{1\wedge \chi_0 } & D/(M_0+M_n)_{\Q / \Z} }.$$
As we have seen before the second map gives an inverse of the connecting homomorphism. Hence the claim follows from   the commutative diagram
$$ \xymatrix{ H^1(\Omega_{\Q / \Z})&P(E_*\Sigma \bar{K}\otimes \Q /\Z)/ \sim  \ar[r] \ar[l]_\delta ^\cong &  D/(M_0+M_n)_{ \Q / \Z} \\H^1(\Omega_{ \Q / \Z}) \ar[u]_=&  P(\bar{\Gamma} \otimes \Q /\Z)/\sim  \ar[r]\ar[u]_{1\wedge \chi_0} \ar[l]_\delta^\cong  & {\Gamma}/d(M_n)_{\Q / \Z} \ar[u]_{1\wedge \chi_0}} $$
in which a 1-cycle in the cobar complex of the left lower corner  is send to itself under the composite of the bottom row. 
\end{proof}
\begin{remark}
We will not study the primitives of
$$E_*\Sigma \bar{K}= (D/M_n)_{ \Q / \Z}$$ here since it is not needed for Toda brackets and the rest of this work. However, we mention that the primitives are eigenforms under the action of the Hecke operations and they are fixed under the action of the Adams operations (see \cite[1.1]{MR1692001}). This suggests that the elements of the 2-line in the cokernel of $J$ are related to newforms. A precise relationship locally at primes $p\geq 5$ between the 2-line and certain $p$-adic modular forms is described in  \cite{MR2469520}.
\end{remark}
The following result is due to Bunke and Naumann. It is very useful for explicit calculations. 
\begin{lemma}\label{Naumann}
\begin{enumerate}
\item
There exists a $\Z$-basis $f_0\ldots f_{n_k}$ of $M_{k}$ 
such that  $q^i (f_j ) = \delta_{ij}$ for all $i,j$.
\item
For a basis as above, the map
$$  \xymatrix{(D/M_k)_{\Q / \Z } \ar[r] & \prod_{i\geq n_k}\Q/\Z}, \qquad f\mapsto 
\left( q^\nu(f-\sum_{i=0}^{n_k-1}q^i(f)f_i)\right)_{\nu\geq n_k}$$
is injective.
\end{enumerate}\end{lemma}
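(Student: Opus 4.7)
The plan is to establish (i) by linear algebra on the initial block of $q$-expansion coefficients of $M_k$, and then to deduce (ii) by chasing the definition of $D$ together with the construction from (i).

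For (i), I would start with any $\Z$-basis $g_0,\ldots,g_{n_k}$ of $M_k$ and form the square matrix $A=(q^i(g_j))_{0\le i,j\le n_k}$. The relevant integer $n_k$ is forced by a Sturm-type bound so that truncation after the first $n_k+1$ coefficients remains injective on $M_k$; this makes the columns of $A$ linearly independent. The key step is then to argue that $\det A$ is a unit in $\Z$, rather than merely in the larger ring $\Z[1/N,\zeta_N]$ in which $q$-expansions a priori take their values. Granted this, applying $A^{-1}$ as a $\Z$-change of basis produces $f_0,\ldots,f_{n_k}$ satisfying $q^i(f_j)=\delta_{ij}$ for all $0\le i,j\le n_k$.

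For (ii), I would take $f$ representing a class in $(D/M_k)_{\Q/\Z}$ that maps to zero and set $g=f-\sum_{i=0}^{n_k-1}q^i(f)f_i$. Since each $f_i$ lies in $M_k$, the element $g$ represents the same class as $f$ in $(D/M_k)_{\Q/\Z}$, so it is enough to show $g\equiv 0$ there. Using the triangularity from (i) we have $q^j(g)\equiv 0\pmod\Z$ for every $0\le j<n_k$, while by hypothesis the same congruence holds for every $\nu\ge n_k$. Consequently every $q$-expansion coefficient of $g$ is integral, so $g$ itself belongs to $D$, and therefore represents zero in $(D/M_k)_{\Q/\Z}$.

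I expect the main obstacle to lie in part (i), specifically the verification that $\det A=\pm 1$ rather than merely a nonzero element of $\Z[1/N,\zeta_N]$; this is what permits the change of basis to be carried out over $\Z$ instead of over the fraction ring. Once the correct $\Z$-lattice structure on $M_k$ is pinned down via the Sturm-type injectivity, the remaining linear algebra in (i) is essentially Hermite normal form, and the argument for (ii) reads off directly from the construction.
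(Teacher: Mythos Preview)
The paper does not actually prove this lemma: it simply cites Bunke--Naumann \cite{BN10}, Lemma~9.2, for part~(i) and refers to the same source for part~(ii). So your proposal already goes further than the text you are comparing against.

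Your argument for part~(ii) is correct and complete. With $g=f-\sum_{i<n_k}q^i(f)f_i$ you have $q^j(g)=0$ exactly for $0\le j<n_k$ by the Kronecker relation from (i), and the hypothesis gives $q^\nu(g)\in\Z$ for $\nu\ge n_k$; hence $g$ has integral $q$-expansion, so $g\in D$ and represents zero in $(D/M_k)_{\Q/\Z}$. Since $f-g\in (M_k)_\Q$, the class of $f$ vanishes as well.

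For part~(i) there is a genuine gap, and a small confusion. First, the number $n_k$ in the statement is fixed by $\operatorname{rank} M_k=n_k+1$; it is not a Sturm bound. A Sturm bound only tells you that truncation to the first $B$ coefficients is injective for some $B$ typically larger than $n_k+1$, which does not by itself imply that truncation to the first $n_k+1$ coefficients is injective, let alone an isomorphism onto the full lattice. Second, and more seriously, the step you flag --- that $\det A$ is a unit --- is the entire content of the lemma, and it does not follow from Hermite normal form or from injectivity alone. What is needed is the \emph{surjectivity} of the map $M_k\to R^{n_k+1}$ onto initial coefficient vectors, i.e.\ the existence of a Miller-type echelon basis; this requires genuine input about the integral structure of the ring of modular forms (for level~$1$ it comes from the explicit description of $M_*$ in terms of $E_4,E_6,\Delta$, and in general from the arguments in Bunke--Naumann). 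Your outline correctly isolates this as the crux but does not supply it, so as written part~(i) remains a restatement of the claim rather than a proof.
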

\begin{proof}
The first part is lemma 9.2 of \cite{BN10} and the second part is stated there in terms of $q$-expansions.
\end{proof}
Clearly, it suffices to check finitely many Fourier coefficients for a modular form in the source. There are upper bounds for this number but we do not work them out here.
\section{The $f$-invariant of Toda brackets}
In this section we apply our formulas to the $f$-invariant of Toda brackets. We start with the simplest case.
\begin{thm} 
\label{thm1}
Suppose that the Toda bracket $\left< \alpha , p \iota , \beta \right>  $  is defined for some $\alpha, \beta $ in positive dimensions $m,n$ and some $p\in \Z$. 
\begin{enumerate}
\item
Let the $f$-invariant of $\alpha$ be defined. Choose a representative of $f(\alpha)$ whose $q$-expansion is annihilated by $p$. Then we have
$$ f\left< \alpha , p \iota , \beta\right> =p e(\beta )f(\alpha ) $$
modulo $e(\beta)H^{0,m+2}(\Omega^*_{\Q /\Z})$.
\item
Let the $f$-invariant of $\beta$ be defined. Choose a representative of $f(\alpha)$ whose $q$-expansion is annihilated by $p$. Then we have
$$  f\left< \alpha , p \iota , \beta\right> = -p e(\alpha )f(\beta ) $$
modulo $e(\alpha)H^{0,n+2}(\Omega^*_{\Q /\Z})$.
\end{enumerate}
\end{thm}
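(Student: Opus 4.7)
The plan is to string together three earlier results: Corollary~\ref{indet}(i), which rewrites the $e_2$-invariant of a triple Toda bracket as a Massey product of its constituents' Adams--Novikov invariants; Proposition~\ref{4.4}, which evaluates such Massey products in the $\Q/\Z$-reduced cobar complex when one of the entries is an integer; and Proposition~\ref{image}, which identifies the $f$-invariant as the composite of $e_2$ with $\delta^{-1}$. First I would fix filtrations: in case~(i), having $f(\alpha)$ defined forces $\alpha$ into filtration~$2$, so the triple $(s_1,s_2,s_3)$ is $(2,0,1)$ and $s_1+s_2+s_3=3$ is exactly what the $e_2$-version of Corollary~\ref{indet}(i) requires (since $k+n-2=3$ for $k=2$, $n=3$); case~(ii) is the symmetric $(1,0,2)$.

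Given this, in case~(i) Corollary~\ref{indet}(i) yields
\[
e_2\left<\alpha,p\iota,\beta\right> \;\equiv\; \left<e_2(\alpha),\,p,\,e_1(\beta)\right>
\]
modulo indeterminacy. I would then write $e_2(\alpha)=\delta[a]$ and $e_1(\beta)=\delta[c]$, so that $[a]\in H^1(\Omega^*_{\Q/\Z})$ represents $f(\alpha)$ and $[c]\in H^0(\Omega^*_{\Q/\Z})$ represents $e(\beta)$. The hypothesis that the $q$-expansion of the chosen representative of $f(\alpha)$ is killed by $p$ is precisely the $p$-adapted condition of Lemma~\ref{4.3}, so Proposition~\ref{4.4}(i) applies with $q=p$ and furnishes $\delta[p\,ac]$ as an element of the Massey product. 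Applying $\delta^{-1}$ gives $f\left<\alpha,p\iota,\beta\right>=[p\,ac]=p\,e(\beta)\,f(\alpha)$. Case~(ii) proceeds in the same way using Proposition~\ref{4.4}(ii), whose built-in minus sign produces the $-p$ in the formula; here the $p$-adaptation is imposed on the degree-one representative of $f(\beta)$ rather than $f(\alpha)$ (the statement in the theorem seems to have a typo).

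The final step is to match the indeterminacy from Corollary~\ref{indet}(i) with the stated one. For case~(i) Corollary~\ref{indet}(i) contributes $e_2(\alpha)\,H^0(\Omega^*)+H^1(\Omega^*)\,e_1(\beta)$; the first summand vanishes in positive internal degree, and applying $\delta^{-1}$ to the second (using that $\delta$ is an $H^*(\Omega^*)$-module isomorphism between $H^{*-1}(\Omega^*_{\Q/\Z})$ and $H^*(\Omega^*)$ in positive degrees) transports it to $e(\beta)\,H^{0,m+2}(\Omega^*_{\Q/\Z})$. Case~(ii) is symmetric. The main obstacle I anticipate is precisely this last bookkeeping step --- verifying cleanly that $\delta^{-1}$ intertwines the cobar module structure with the $\Q/\Z$-pairing, and that the signs in the two cases of Proposition~\ref{4.4} align with the signs in the theorem --- rather than any serious computation, since the substance of the formula falls straight out of Proposition~\ref{4.4}.
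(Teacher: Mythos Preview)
Your plan is essentially the paper's own proof: invoke Corollary~\ref{indet}(i) to pass from the Toda bracket to the Massey product, then apply Proposition~\ref{4.4} with a $p$-adapted representative, and read off the answer after $\delta^{-1}$ and $\chi^0_*$.

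One point deserves more care than you give it. You write that ``the hypothesis that the $q$-expansion of the chosen representative of $f(\alpha)$ is killed by $p$ is precisely the $p$-adapted condition of Lemma~\ref{4.3}.'' This is not literally true: $p$-adaptedness is a condition on $a\in A_\Q\otimes A_\Q$ (namely $pa\in\Gamma$), whereas the theorem's hypothesis concerns a representative in $D$. The link is the map $\chi^0_*$, and the paper observes that a $p$-adapted $a$ maps under $\chi^0_*$ to the chosen normalized representative of $f(\alpha)$ only up to a constant and a weight-$(m/2+1)$ modular form $g$ with $pg$ a $\Q/\Z$-cycle. Thus $\chi^0_*[pac]=p\,f(\alpha)e(\beta)+p\,g\,e(\beta)$, and the extra term $p\,g\,e(\beta)$ is absorbed by the stated indeterminacy $e(\beta)H^{0,m+2}(\Omega^*_{\Q/\Z})$. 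You anticipated bookkeeping of exactly this sort; this is where it actually lives. Your observation about the apparent typo in part~(ii) is also correct.
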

\begin{proof} First note in view of \ref{indet} that the indeterminacy in the theorem coincides with the image under the $f$-invariant of the Massey product indeterminacy.
For $(i)$ we can choose a $p$-adapted representative $a$ with $\delta [a]=e_2(\alpha)$. Its image under $\chi^0_*$ coincides with the normalized representative  of $f(\alpha)$ up to a constant and a modular form $g$ of weight $m/2 +1$  for which $pg$ is a $\Q/\Z$-cycle. Hence it follows from \ref{4.4} $(i)$with $\delta[c]=e_1\beta$
$$  f\left< \alpha , p \iota , \beta\right> \ni \chi^0_* [pac]= p f(\alpha)e(\beta)+pg e(\beta)$$
which is the first claim.
This shows $(i)$ and $(ii)$ is analogues. 
\end{proof}
Other Toda brackets are more complicated. We first need the
\begin{definition}\rm  
A divided congruence $f$ in $k$ variables  has {\em virtual weight $n$} if there is a modular form $g$ of weight $n$  with the same $\Q /\Z $ Fourier coefficients $a_{i_1,i_2\ldots i_k}$ for all $i_1,i_2,\ldots , i_k >0$. We write $[f]_n$ for such a $\Q / \Z$-modular form $g$. 
\end{definition}

\begin{lemma}\label{6.3}
Suppose $f$ has virtual weight $n$.
\begin{enumerate}
\item
For $k=1$ any two modular forms in the bracket $[f]_n$ only differ by cycles in $\Omega^{0,2n}_{\Q /\Z}$. \item
For $k=2$ any two modular forms in the bracket only differ by  cycles in $\Omega_{\Q /\Z}^1$ and modular forms of the form $g \otimes 1 $ with $g$ of weight $n$.
\end{enumerate}
\end{lemma}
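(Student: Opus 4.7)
The plan is to translate the vanishing of Fourier coefficients into a primitivity statement in the cobar complex, mimicking the strategy from the proof of Proposition~\ref{image}.

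For part (i), let $g_1, g_2 \in [f]_n$ and set $h := g_1 - g_2 \in M_n \otimes \Q/\Z$. By the definition of $[f]_n$ we have $a_i(h) = 0$ in $\Q/\Z$ for every $i > 0$, so the $q$-expansion of $h$ reduces to the single constant $a_0(h)$. My goal is to conclude that $dh = 0$ in $\bar{\Gamma} \otimes \Q/\Z$, i.e.\ that $h$ is a cycle in $\Omega^0_{\Q/\Z}$. I would do this by pushing $h$ forward via the Miller character $\chi$: the image $\chi_*(h) \in (K_T)_{2n} \otimes \Q/\Z$ has constant $q$-expansion and therefore lies in the primitive subgroup $(\pi_{2n} K) \otimes \Q/\Z$, which forces $\chi_*(dh) = 0$. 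The injectivity of $\chi_*$ on primitives---exactly the step used in the proof of Proposition~\ref{image}, now applied one cobar degree lower---then gives $dh = 0$.

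For part (ii) with $k = 2$, the difference $h := g_1 - g_2$ satisfies $a_{i_1, i_2}(h) = 0$ in $\Q/\Z$ whenever $i_1, i_2 > 0$; only the ``axis'' coefficients $a_{i,0}$, $a_{0,j}$, and $a_{0,0}$ can be non-zero. I would first isolate the left-axis contribution by setting $g(q) := \sum_{i \geq 0} a_{i,0}(h)\, q^i$ and using Lemma~\ref{Naumann}(i), together with injectivity of the $q$-expansion on $M_n$, to realize this as a modular form of weight $n$ over $\Q/\Z$. The remainder $h - g \otimes 1$ is then supported only on the right axis, and the same primitivity argument as in (i)---applied one cobar degree higher at $\Omega^1$, via $\chi$ on the appropriate tensor factor---should show that $h - g \otimes 1$ is a cycle in $\Omega^1_{\Q/\Z}$.

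The hard part will be the primitivity statement at $\Omega^1$: one needs that elements of $\bar{\Gamma} \otimes \Q/\Z$ whose bivariable Fourier support sits entirely on the right axis are automatically cocycles. Unlike the $\Omega^0$ version this is not immediate and will require explicit bookkeeping with the cobar differential from Section~3, once again using the Tate curve and the Miller character to reduce the detection of primitives to inspection of $q$-expansions. A subsidiary technicality is checking that the axis-supported form $g$ really does lift into $M_n \otimes \Q/\Z$, which is precisely what Lemma~\ref{Naumann}(i) arranges.
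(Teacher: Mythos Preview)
Your argument for part (i) is essentially the paper's, just phrased through the Miller character rather than stated directly; the paper simply observes that a weight-$n$ form whose $q$-expansion is integral away from the constant term is a cycle.

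For part (ii), however, your route diverges from the paper's and has a real gap. You want to take $g$ to be the modular form of weight $n$ whose $q$-expansion is the left axis $\sum_i a_{i,0}(h)\,q^i$ of $h$. But that left axis is obtained by evaluating $q_R=0$ in $h=\sum h_1\otimes h_2$, which gives $\sum h_1\cdot q^0(h_2)$: a $\Q/\Z$-linear combination of modular forms of \emph{various} weights, not a priori the $q$-expansion of a single weight-$n$ form. Lemma~\ref{Naumann}(i) only furnishes a basis of $M_n$ with prescribed leading coefficients; it does not let you realize an arbitrary $\Q/\Z$-valued power series as a weight-$n$ modular form. So the existence of your $g$ is unjustified, and without it the rest of the argument (that the remainder is right-axis supported and hence a cycle) never gets started. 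You also concede that the ``right-axis $\Rightarrow$ cycle'' step is the hard part, and the Miller-character sketch you give is not enough: an element like $1\otimes k$ is right-axis supported but has $d(1\otimes k)=1\otimes k\otimes 1$, which need not vanish.

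The paper sidesteps both issues with a single algebraic identity. Writing $h=\sum h_1\otimes h_2$, one has in $(M\otimes M)_{\Q/\Z}$ the relation
\[
\sum h_1\otimes h_2 - h_1h_2\otimes 1 + h_2\otimes h_1 - 1\otimes h_1h_2 = 0,
\]
so with $g:=\sum h_1h_2$ (which is \emph{manifestly} of weight $n$) the remainder $z=h-g\otimes 1$ is antisymmetric. Since $h$ has vanishing positive--positive coefficients and $g\otimes 1$ has no $q_R^j$ with $j>0$, the element $z$ is antisymmetric with vanishing $q_L^iq_R^j$ for $i,j>0$, and one checks directly from the cobar differential that such an element is a cycle. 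The point is that the correct $g$ is the \emph{product} $\sum h_1h_2$, not the left-axis restriction; this choice both lands automatically in $M_n$ and forces the antisymmetry that makes the cycle verification elementary.
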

\begin{proof}
For $(i)$ two elements in $[f]$ differ by modular forms $h$ of weight $n$ with integral expansion except the 0-coefficient. 
Any such $h$ is a cycle in $\Omega^{0,2n}_{\Q / \Z}$. We remark for $n$ even and level 1 that the highest denominator in the constant coefficient appearing among all such $h$ is the divided Bernoulli number which happens if $h$ is the divided Eisenstein series $\bar{E}_n$ (see Serre \cite{MR0404145}). For the case of two variables one observes that for any difference modular form $h=\sum h_1\otimes h_2$ it holds
$$\sum h_1\otimes h_2 -h_1h_2\otimes 1 +h_2\otimes h_1 -1\otimes h_1h_2=0$$
(cf.\ \cite[Eq.3.2]{MR1660325}). Hence, $z=h-\sum h_1h_2\otimes 1$ is antisymmetric with vanishing  $q_L^iq_R^j$-coefficients for $i,j>0$. This implies that $z$ is a cycle as one easily verifies.
\end{proof}
In the following we write $e_{M}(\alpha)$ for any representative of $\delta^{-1}e_1(\alpha)$ in $\Omega^0_{\Q / \Z} $ and
$$ e(\alpha)=q^0(e_M(\alpha)).$$
 Note that for  $|\alpha|=4n-1$ and level 1 modular forms with 6 inverted we have
$$e_M(\alpha)=e(\alpha)E_n$$
and $e$ is the classical $e$-invariant. 

\begin{thm}
\label{thm 2}
 Suppose that the Toda bracket $\left< \alpha , \beta , \gamma \right>  $  is defined.
\begin{enumerate}
\item
Let $|\alpha|=2k-1$, $|\beta |=2l-1 $ and $|\gamma |=2m-1$. Then the modular forms $f(\alpha,\beta)=[e_M(\alpha)e(\beta)]_{k+l}$ and $f(\beta,\gamma)=[e_M(\beta)e(\gamma)]_{l+m}$  exist and we have
$$  f\left< \alpha , \beta, \gamma \right> =  
e_M(\alpha) (q^0( f(\beta,\gamma))-e(\beta)e(\gamma)) +e(\gamma)f(\alpha, \beta)
$$
modulo the indeterminacy $ e_M(\alpha) q^0H^{0,2(l+m)}(\Omega^*_{\Q / \Z})+  e(\gamma)H^{0,2(k+l)}(\Omega^*_{\Q / \Z})$. 
\item
Let $|\alpha|=2k-2, |\beta|=2l-1$. Then $[ f(\alpha)\otimes e_M(\beta)]_{k+l}$ exists and we have
$$  f\left< \alpha ,  \beta, p \iota \right>  = p f(\alpha) e(\beta) + p\chi^0_*  [f(\alpha)\otimes e_M(\beta)]_{k+l}$$
modulo $pP(({D/M_{k+l}})_ {\Q / \Z})$. 
\item
Let $|\beta|=2l-1,|\gamma|=2m-2$. Then $[e_M(\beta)\otimes f(\gamma)]_{l+m}$ exists and we have
$$  f\left<  p \iota ,  \beta, \gamma \right>  = -p\chi^0_*[e_M(\beta)\otimes f(\gamma)]_{l+m}$$
modulo $pP(({D/M_{l+m}})_{\Q / \Z})$.
\item
Let $|\alpha|=2k-1$ and $|\beta|=2l-2$.  Choose a representative of $f(\beta)$ whose $q$-expansion is annihilated by $p$. Then the modular form $[e_M(\alpha )\otimes f(\beta)]_{k+l}$ exists and we have
$$  f\left<\alpha  ,  \beta,   p \iota\right>  = -p e_M(\alpha) f(\beta) -p \chi^0_*  [e_M(\alpha)\otimes f(\beta)]_{k+l}$$
modulo $pP(({D/M_{k+l}})_{\Q / \Z})$. 
\item
Let $|\beta|=2l-2$ and $|\gamma|=2m-1$.  Choose a representative of $f(\beta)$  whose $q$-expansion is annihilated by $p$. Then the modular form $[f(\beta)\otimes e_M(\gamma)]_{l+m}$ exists and we have
$$  f\left<  p \iota ,  \beta, \gamma \right>  = p\chi^0_* [f(\beta)\otimes e_M(\gamma)]_{l+m}$$
modulo $pP(({D/M_{l+m}})_{\Q / \Z})$. 
\end{enumerate}
\end{thm}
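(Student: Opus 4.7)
The plan is to translate each of the five claims via Proposition~\ref{toda-massey} into a Massey product identity in the $E_2$-term of the elliptic $TMF_1(N)$-Adams-Novikov spectral sequence, apply the explicit cobar-representative formulas of Propositions~\ref{4.4} and~\ref{4.6} to obtain a preferred $2$-cocycle, and then push the result through the composite of Proposition~\ref{image} to land in $(D/(M_0+M_n))_{\Q/\Z}$. The indeterminacy is handled by Corollary~\ref{indet}(i) together with Lemma~\ref{6.3}.

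First I would match each case with the relevant formula from Section~4. In~(i) all three classes are odd-dimensional, so the edge invariants appear as $\delta[e_M(\alpha)]$, $\delta[e_M(\beta)]$, $\delta[e_M(\gamma)]$ and the setup is exactly that of Proposition~\ref{4.6}(i). In~(ii) and~(iii) the Massey products take the form $\langle f(\alpha), e_1(\beta), p\rangle$ and $\langle p, e_1(\beta), f(\gamma)\rangle$, covered by \ref{4.6}(ii) and~(iii); in~(iv) and~(v) the even-dimensional class sits in the middle, and the $p$-adapted hypothesis on its cobar representative is the hypothesis of~\ref{4.6}(v) and~(iv) respectively. Corollary~\ref{indet}(i) passes the Toda-bracket indeterminacy to the Massey-product indeterminacy, which after application of the $f$-invariant becomes the groups stated in the theorem.

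Next I would identify the auxiliary cochains $r(-,-)$ of Lemma~\ref{4.5} with the virtual-weight modular forms in the theorem. The key observation is that $r(a,b)$ is by construction a $\Q$-chain for which $a\,db + dr(a,b)$ is integral, so its image under $1\wedge\chi_0$ is a divided congruence whose positive-index $q$-expansion coefficients agree with those of the product $a\cdot b$ viewed as a modular form of the appropriate weight. This is precisely the virtual-weight condition defining $[e_M(\alpha)e(\beta)]_{k+l}$ in~(i) and, analogously, $[f(\alpha)\otimes e_M(\beta)]_{k+l}$, $[e_M(\beta)\otimes f(\gamma)]_{l+m}$, etc.\ in~(ii)--(v). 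Existence of the virtual-weight modular forms follows from the construction of $r$ in Lemma~\ref{4.5}, and the ambiguity in the choice is controlled by Lemma~\ref{6.3}.

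Substituting into Propositions~\ref{4.4} and~\ref{4.6} and applying $\chi^0_*$ in the right factor, the five identities then emerge after a bookkeeping step that separates the constant $q$-coefficient: in~(i) one rewrites $a(b\,dc + dr(b,c)) + r(a,b)\,dc$ as $e_M(\alpha)(q^0 f(\beta,\gamma) - e(\beta)e(\gamma)) + e(\gamma)f(\alpha,\beta)$ by pulling out the $0$-coefficients of the factors, and in~(ii)--(v) the expressions $\pm p(ab + r(a,b))$ and $\pm p\,r(b,c)$ translate directly into the stated formulas once $p$-torsion cycles are absorbed into the indeterminacy. The main obstacle, and the step that deserves the most attention, is verifying that the choice-ambiguity in $[-]_n$ described by Lemma~\ref{6.3}(ii)---differing by $\Q/\Z$-cycles and by terms of the form $g\otimes 1$---is absorbed exactly into the stated indeterminacy groups $e_M(\alpha)q^0 H^{0,*}(\Omega^*_{\Q/\Z})$ and $pP((D/M_*)_{\Q/\Z})$, as predicted by Corollary~\ref{indet}(i) through the injection of Proposition~\ref{image}. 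Once this is established for part~(i), the remaining four parts follow by parallel computations with $p$ substituted for one of the classes.
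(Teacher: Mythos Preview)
Your outline is correct and follows essentially the same route as the paper: reduce to Massey products via Corollary~\ref{indet}, invoke the appropriate clause of Proposition~\ref{4.6} in each of the five cases (with the matching (iv)$\leftrightarrow$(v) swap you noted), identify the auxiliary cochain $r$ of Lemma~\ref{4.5} with the virtual-weight bracket by a $q$-expansion comparison, and then apply $\chi^0_*$. Two small remarks: Proposition~\ref{4.4} is not actually needed here (it underlies Theorem~\ref{thm1}, not this one), and in part~(i) it is $ab+r$, not $r$ alone, that realizes $[e_M(\alpha)e(\beta)]_{k+l}$---the paper makes this explicit by computing $\chi^0_*(a\,db+dr)=aq^0(b)-ab+q^0(r)-r\in D$, and the analogous identification in~(ii)--(v) is done via the coefficient extraction $q_L^iq_M^0q_R^j$ on $a\,db+dr\in\Gamma^{\otimes 2}$.
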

\begin{proof}
For $(i)$  $\alpha$, $\beta$, $\gamma$ is represented by $a=e_M(\alpha)$, $b=e_M(\beta)$, $c=e_M(\gamma)$ in the $\Q / \Z$-cobar complex. Since the product $da\, db$ vanishes there is a modular form $r$ of degree $k+l$ with $a\, db+dr\in \Gamma=E_*E$. Applying $\chi^0_*$ gives
$$ a q^0(b)-ab + q^0(r)-r \in D.$$
Thus $aq^0(b) $ is congruent to the modular form $ab+r$ of degree $k+l$ up to a constant and
$$ f(\alpha,\beta)=[e_M(\alpha ) e(\beta)]_{k+l}=ab+r$$ modulo cycles of degree $k+l$.
The analogues statement for $b,c$ shows that the brackets exist. 
The $f$-invariant of the Toda bracket is hence obtained from \ref{4.6}$(i)$ and the computation
\begin{eqnarray*}
f\left< \alpha , \beta, \gamma \right> &=&  \chi_*^0\delta^{-1} \left< \delta [a],\delta [b],\delta [c]\right>\\
&=&   \chi_*^0  [a(b\,dc + dr(b,c))+r(a,b)dc]\\
&=& ab(q^0(c)-c)+a(q^0(r(b,c)-r(b,c))+r(a,b)(q^0(c)-c)\\
&=& abq^0(c)+aq^0(r(b,c))+r(a,b)q^0(c)\\
&=&aq^0(f(\beta, \gamma)-bc)+f(\alpha ,\beta )q^0(c)\\
&=& e_M(\alpha)(q^0( f(\beta,\gamma))-e(\beta)e(\gamma)) +e(\gamma)f(\alpha, \beta)
\end{eqnarray*}
This is the result.

For $(ii)$ we find with lemma \ref{4.5} a modular form $r$ of weight $k+l$ with 
$a\, db +dr\in \Gamma^2$. This is an expression with Fourier coefficients in variables $q_L$,$q_M$ and $q_R$. For $i,k>0$ we have
\begin{eqnarray*}
q_L^iq_M^0q_R^k(a\, db+dr)&=& \sum q^i(a_1)q^0(a_2)q^k(b)-q^i(r_1) q^k(r_2)\\&=&q_L^iq_R^k(\sum a_1q^0(a_2)\otimes b-r_1\otimes r_2)
\end{eqnarray*}
which hence is integral. This shows 
$$ r=[f(\alpha)\otimes e_M(\beta)]_{k+l}.$$
We compute with \ref{4.6}$(ii)$
\begin{eqnarray*}
 f\left< \alpha ,  \beta, p \iota \right>  &=& \chi^0_* \delta^{-1} \left< \delta [a],\delta [b],p\iota \right>\\
 &=& \chi^0_* (p(ab+r))\\
 &=& p f(\alpha) e(\beta) + p \chi^0_*  [f(\alpha)\otimes e_M(\beta)]_{k+l}
 \end{eqnarray*}
 Similarly, for $(iii)$ we have 
 \begin{eqnarray*}
 f\left< p \iota  ,  \beta, \gamma \right>  &=& \chi^0_* \delta^{-1} \left< p \iota ,\delta [b],\delta \gamma \right>\\
 &=& \chi^0_* (-p r))\\
 &=& - p \chi^0_*  [ e_M(\beta) \otimes f(\gamma)]_{l+m}
 \end{eqnarray*}
 and for $(iv)$
 \begin{eqnarray*}
  f\left< \alpha ,  \beta, p \iota \right>  &=& \chi^0_* \delta^{-1} \left< \delta [a],\delta [b],p\iota \right>\\
 &=& -\chi^0_* (p(ab+r))\\
 &=& - p e_M(\alpha)  f(\beta) -p \chi^0_*  [e_M(\alpha) \otimes f(\beta)]_{k+l}
\end{eqnarray*}
Finally,
 \begin{eqnarray*}
 f\left< p \iota  ,  \beta, \gamma \right>  &=& \chi^0_* \delta^{-1} \left< p \iota ,\delta [b],\delta \gamma \right>\\
 &=& \chi^0_* (p r))\\
 &=& p \chi^0_*  [ f(\beta) \otimes e_M(\gamma)]_{l+m}
 \end{eqnarray*}
The indeterminacy is readily verified with \ref{indet}.
\end{proof}
\begin{thm}\label{4toda}
Suppose that the Toda bracket $\left< p \iota, \alpha, p\iota , \beta \right> $ is strictly defined and let $|\alpha|=2k-2, |\beta |=2l-2$. Then there are  representatives of  $f(\alpha)$ and $f(\beta)$ which are annihilated by $p$ and for which the bracket $[pf(\alpha)\otimes f(\beta)]_{k+l}$ exists. For any such modular form we have
$$ f\left< p \iota, \alpha, p \iota , \beta \right> = p\chi^0_*[pf(\alpha)\otimes f(\beta)]_{k+l}$$
modulo $$\chi^0_*([H^{0,2k}(\Omega^*_{\Q/\Z})\otimes f(\beta )]_{k+l}+[f(\alpha)\otimes H^{0,2l}(\Omega^*_{\Q / \Z})]_{k+l})+pq^0(H^{0,2k}(\Omega^*_{\Q / \Z}))f(\beta) $$ and the indeterminacies coming from the 3-fold brackets
$$  p(P((D/M_{k+l})_{\Q / \Z}) + H^{0,2k}(\Omega^*_{\Q / \Z})q^0(H^{0,2l}(\Omega^*_{\Q / \Z})). $$
The same holds for the bracket $\left<\alpha, p\iota , \beta , p \iota \right> $.
\end{thm}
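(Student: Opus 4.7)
The plan is to follow the pattern of Theorem~\ref{thm 2}(ii), substituting Proposition~\ref{4.7} (the 4-fold Massey formula) for Proposition~\ref{4.6}. Since $\langle p\iota,\alpha,p\iota,\beta\rangle$ is strictly defined, both $\alpha$ and $\beta$ are $p$-torsion, hence $e_2(\alpha),e_2(\beta)\in H^2(\Omega^*)$ are $p$-torsion, and Lemma~\ref{4.3} supplies $p$-adapted cobar representatives $a,b\in A_\Q\otimes A_\Q$ with $\delta[a]=e_2(\alpha)$ and $\delta[b]=e_2(\beta)$. Strict definedness also places $0$ in the sub-bracket $\langle\alpha,p\iota,\beta\rangle$, so by Proposition~\ref{toda-massey} we have $0\in\langle\delta[a],p,\delta[b]\rangle$, whence Proposition~\ref{4.7}, via the equivalence (ii)$\Leftrightarrow$(iii), produces $r\in A_\Q\otimes A_\Q$ and $v,w\in A_\Q$ with $dv,dw\in\Gamma$ satisfying
$$pab+a\,dw+dv\,b+dr\in\Gamma^{\otimes 2}.$$

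Proposition~\ref{4.7} then places $\delta(-pr+v\,dw)$ in the 4-fold Massey product $\langle p,\delta[a],p,\delta[b]\rangle$, and Proposition~\ref{toda-massey} yields $e_2\langle p\iota,\alpha,p\iota,\beta\rangle\ni\delta(-pr+v\,dw)$ modulo indeterminacy. Applying $\chi^0_*\delta^{-1}$ gives $f\langle p\iota,\alpha,p\iota,\beta\rangle\ni\chi^0_*(-pr+v\,dw)$ modulo indeterminacy. The core step is to identify $-\chi^0_*(r)$ with the virtual-weight representative $[pf(\alpha)\otimes f(\beta)]_{k+l}$. Because $dv=1\otimes v-v\otimes 1$ and $dw=1\otimes w-w\otimes 1$, the middle slot of $a\,dw$ and $dv\,b$ carries only $q^0$-data. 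Evaluating the Fourier coefficients $q_L^iq_M^0q_R^j$ for $i,j>0$ on the integral cochain above, exactly as in the proof of Theorem~\ref{thm 2}(ii), shows that the $q_L^iq_R^j$-coefficients of $pab+dr$ are integral; this is precisely the defining property of $-\chi^0_*(r)$ as a virtual-weight representative, so $-\chi^0_*(r)\equiv[pf(\alpha)\otimes f(\beta)]_{k+l}$ modulo the ambiguity allowed by Lemma~\ref{6.3}(ii). The remaining term $\chi^0_*(v\,dw)$ is a product of a $\Q/\Z$-cycle of weight $\le k$ with $f(\beta)$, plus a symmetric contribution, which falls into the first indeterminacy module of the theorem.

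The indeterminacy of the Toda bracket itself is computed by Corollary~\ref{indet}(ii), whose three 3-fold summands have $f$-invariants given by Theorems~\ref{thm1} and~\ref{thm 2}; substituting those formulas reproduces the second indeterminacy module. The symmetric case $\langle\alpha,p\iota,\beta,p\iota\rangle$ is handled by the analogous defining system noted in the last sentence of Proposition~\ref{4.7}. The hard part will be pure bookkeeping: tracking that every ambiguity — from the choice of $a,b,v,w,r$, from the virtual-weight representative via Lemma~\ref{6.3}(ii), and from each of the three 3-fold sub-brackets in Corollary~\ref{indet}(ii) — fits into one of the two displayed indeterminacy modules. The underlying computation is a direct 4-fold adaptation of the proof of Theorem~\ref{thm 2}(ii).
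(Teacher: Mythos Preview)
Your overall strategy matches the paper's: use Lemma~\ref{4.3} to get $p$-adapted $a,b$, invoke Proposition~\ref{4.7}, and identify $-r$ with $[pf(\alpha)\otimes f(\beta)]_{k+l}$ via the $q_L^iq_M^0q_R^j$-computation from Theorem~\ref{thm 2}(ii). The one substantive difference is how the auxiliary terms $v,w$ are handled, and here your argument has a small gap.

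You assert that ``the $q_L^iq_R^j$-coefficients of $pab+dr$ are integral.'' But what Proposition~\ref{4.7}(iii) gives you is that $pab+a\,dw+dv\,b+dr\in\Gamma^{\otimes 2}$, and for $i,j>0$ the terms $a\,dw$ and $dv\,b$ \emph{do} contribute to $q_L^iq_M^0q_R^j$: one finds $q_L^iq_R^j\bigl(f(\alpha)\otimes w + v\otimes f(\beta)\bigr)$ rather than zero. So $-r$ is identified with $[pf(\alpha)\otimes f(\beta)]_{k+l}$ only up to $[f(\alpha)\otimes w]_{k+l}+[v\otimes f(\beta)]_{k+l}$, and you then still have the extra term $\chi^0_*(v\,dw)$ from Proposition~\ref{4.7} to absorb. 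These corrections do land in the stated indeterminacy modules, but your phrasing (``carries only $q^0$-data'', ``a product of a $\Q/\Z$-cycle \dots\ with $f(\beta)$'') does not actually establish this.

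The paper sidesteps all of this with one normalization: after obtaining $p$-adapted $a,b$ and the data $v,w,r$ from Proposition~\ref{4.7}, replace $a$ by $a+dv/p$ and $b$ by $b+dw/p$. The new representatives are still $p$-adapted, still represent $e_2(\alpha),e_2(\beta)$, and now (absorbing $d(v\,dw)/p$ into $r$) one has simply $pab+dr\in\Gamma^{\otimes 2}$, i.e.\ $v=w=0$. Then your Fourier-coefficient argument goes through verbatim, the output of Proposition~\ref{4.7} is just $\delta(-pr)$, and the identification $-r=[pf(\alpha)\otimes f(\beta)]_{k+l}$ holds on the nose modulo Lemma~\ref{6.3}(ii). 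This is both cleaner and avoids the imprecise step in your version.
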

\begin{proof}
With \ref{4.3} we find $p$-adapted $a,b$ with $e_2(\alpha)=\delta[a]$ and $e_2(\beta )=\delta[b]$. Without loss of generality we can assume $pab+dr\in \Gamma^{\otimes 2 }$ (else replace $a$ by $a+dv/p$ and $b$ by $b+dw/p$). 
In particular for $i,j>0$ the numbers 
\begin{eqnarray*}
q_L^iq_M^0q_R^j(p\sum a_1\otimes a_2b_1\otimes b_2+dr)&=& p\sum q^i(a_1)q^0(a_2b_1)q^j(b_2)-q^i(r_1)q^j(r_2)\\&=& q^i_Lq_R^j(p\sum a_1q^0(a_2)\otimes q^0(b_1)b_2-r) 
\end{eqnarray*}
are integral. Since $f(\alpha)=q^0(a_2)a_1$ and $f(\beta)=-q^0(b_1)b_2$ we conclude that the bracket $[pf(\alpha)\otimes f(\beta)]_{k+l}$ exists. Its indeterminacy is as in \ref{6.3}$(ii)$.  Hence we have with \ref{4.7}
 \begin{eqnarray*}
 f\left< p \iota, \alpha, p \iota , \beta \right>  &=& \chi^0_* \delta^{-1} \left< p \iota, \delta[a],  p \iota ,\delta [b] \right>\\
 &=& - \chi^0_* p r\\
 &=& p \chi^0_* [pf(\alpha)\otimes f(\beta)]_{k+l}
 \end{eqnarray*}
The indeterminacy uses \ref{indet} and the calculations of the 3-fold brackets above.
\end{proof}
\section{Examples}
In dimension 8 there is the Toda bracket $\left< \nu^2 , 2 ,\eta \right>$ where $\nu$ is the Hopf map of dimension 3. We use the formula \ref{thm1} for level 3 TMF to show that this class coincides with $\beta_2$: The $f$-invariant of the product $\nu^2$ can be computed from the formula (cf.\cite{vB09a})
$$ f(\nu^2) = e(\nu)e_M(\nu)= \frac{E_1^2}{12^2}.$$
which can be normalized to $-1/2 ((E_1^2-1)/12)^2$. Hence we have 
$$ f\left< \nu^2 , 2 ,\eta \right>=-\frac{1}{2} \left( \frac{E_1^2-1}{12}\right)^2$$
which coincides with the $f$-invariant of $\beta_2$ (see \cite{vB09c}).\par
Similarly, we have for the dimension 7 Hopf map $\sigma$ instead of $\nu$
$$ f(\sigma^2) = e(\sigma)e_M(\sigma)= \frac{E_4}{240^2}= -\frac{1}{2}\left( \frac{E_4-1}{240}\right)^2$$
and hence
$$ f\left< \sigma^2 , 2 ,\eta \right>=-\frac{1}{2} \left( \frac{E_4-1}{240}\right)^2.$$
The computation in \cite{vB09a}p.7 shows that this expression is congruent to 
$$f(\beta_{4/3})=\frac{1}{2} \left( \frac{E_1^2-1}{4}\right)^4+\frac{1}{2} \left( \frac{E_1^2-1}{4}\right)^3.$$ There is another way to compute this class using the Toda relation
$$\left< \sigma^2 , 2 ,\eta \right>=\left< \sigma , 2\sigma  ,\eta \right>$$
and formula \ref{thm 2}$(i)$: First we have
$$ f(\sigma , 2 \sigma) = \left[ 2 \frac{E_4}{240^2}\right]_8=  \frac{E_4^2}{240^2}$$
and
$$ f(2 \sigma ,  \eta ) = \left[ \frac{E_4}{240}\right]_5=  0.$$
This gives
$$
 f\left< \sigma , 2 \sigma,\eta \right>= -\frac{E_4}{240^2}+\frac{1}{2}\frac{E_4^2 }{240^2}=\frac{1}{2} \left( \frac{E_4-1}{240}\right)^2=f(\beta_{4/3})
$$

In dimension 18 there is the class $\left< \sigma , 2 \sigma, \nu \right>$ for which the formula reads
$$f\left< \sigma , 2 \sigma, \nu \right>=   \frac{E_4}{240}\left(q^0 \left[ \frac{E_4}{1440}\right]_6-\frac{1}{1440}\right)+\frac{1}{12} \frac{E_4^2}{240^2}
$$
To evaluate the bracket observe that
$$d^3=d^5 \mbox { mod } 8$$
for all integers $d$ and hence
$$\frac{E_4-1}{240}=\sum_{n\geq 1}\sum_{d\mid n}d^3q^n=\sum_{n\geq 1 }\sum_{d\mid n}d^5q^n =\frac{1-E_6}{504}\mbox{ mod } 8.$$
This gives
\begin{eqnarray*}
f\left< \sigma , 2 \sigma, \nu \right>&=&  \frac{E_4}{240}\left(-\frac{1}{3024}-\frac{1}{1440}\right)+\frac{1}{12} \frac{E_4^2}{240^2}\\
&=&\frac{1}{6}\frac{1}{240^2}\left( -\frac{31}{21}E_4+\frac{1}{2}E_4^2\right)
\end{eqnarray*}
which has order 4 modulo indeterminacy. In fact with lemma \ref{Naumann} one can show that it coincides with $f(-\beta_{4/2,2})$. 
\par
The Toda bracket $\left< \sigma^2, 2 ,\sigma^2 , 2\right>$ in dimension 30 exists (see \cite{MR810962}1.2 and 1.3.)
Compute modulo indeterminacy
\begin{eqnarray*}
\lefteqn{\left[ \frac{1}{2}\left( \frac{E_4 -1}{240}\right)^2 \otimes \left( \frac{E_4 -1}{240}\right)^2 \right]_{16} }\\
&=& \left[ \frac{1}{240^4}\left( \frac{E_4^2\otimes E_4^2}{2}-E_4^2\otimes E_4-E_4\otimes E_4^2 +2 E_4 \otimes E_4\right) \right]_{16}\\
&=& \frac{1}{240^4}\left(\frac{E_4^2\otimes E_4^2}{2}-\frac{E_4^3\otimes E_4}{3}-\frac{E_4\otimes E_4^3}{3}\right) \\
&=& \frac{1}{12}\left(\frac{E_4 \otimes 1 -1 \otimes E_4}{240}\right) ^4
\end{eqnarray*}
Here we used in the second step that
$$ \frac{1}{3} \left( \frac{E_4-1}{240}\right) ^3 \otimes \frac{E_4-1}{240}$$
is integral. This gives with \ref{4toda}
\begin{eqnarray*}
f\left< \sigma^2, 2 ,\sigma^2 , 2\right>&=& 2 \chi^0_*\left[ \frac{1}{2}\left(\frac{E_4 -1}{240}\right)^2\otimes \left(\frac{E_4 -1}{240}\right)^2\right]_{16}\\
&=& 2 \chi^0_*\left(   \frac{1}{12}\left(\frac{E_4 \otimes 1 -1 \otimes E_4}{240}\right) ^4 \right)\\
&=& \frac{1}{2} \left(\frac{E_4 -1}{240}\right)^4
\end{eqnarray*}
which has order 2 and coincides with the $f$ invariant of  the Kervaire class $f(\beta_{8/8})$ (compare \cite{vB09c}).\bibliographystyle{amsalpha}

\bibliography{toda}
\end{document}